  \theoremstyle{definition}
  \newtheorem{thm}{Theorem}[section]
  \newtheorem{cor}[thm]{Corollary}
  \newtheorem{proposition}[thm]{Proposition}
  \newtheorem{lem}[thm]{Lemma}
  \newtheorem{defin}[thm]{Definition}
  \newtheorem{rem}[thm]{Remark}
  \newtheorem{rems}[thm]{Remarks}
  \numberwithin{equation}{section}
\newcommand{\beqn}{\begin{eqnarray*}}
\newcommand{\eeqn}{\end{eqnarray*}}
\newcommand{\ben}{\begin{equation}}
\newcommand{\een}{\end{equation}}
\newcommand{\beq}{\begin{eqnarray}}
\newcommand{\eeq}{\end{eqnarray}}
\newcommand{\benn}{\begin{equation*}}
\newcommand{\eenn}{\end{equation*}}
\newcommand{\VV}{  X }
\DeclareMathOperator{\dt}{\frac{d}{dt} }
\DeclareMathOperator{\im}{im }
\newcommand{\R}{{\mathbf{R}}}
\newcommand{\Hs}{{\mathbb{H}}}
\newcommand{\N}{{\mathbf{N}}}
\newcommand{\fr}{\textnormal{fr}}
\newcommand{\befr}{\begin{framed}}
\newcommand{\efr}{\end{framed}}
\begin{document}

\title{A structure theorem for shape functions defined on submanifolds}
\author{Kevin Sturm}
\date{}

\maketitle

% \author{Kevin Sturm\nofnmark\footnote{Weierstrass Institute \\ Mohrenstr. 39\\
%     10117 Berlin\\ Germany\\ E-Mail: kevin.sturm@wias-berlin.de}}

% \thanks{\\[2mm] This work was partially supported by the DFG Research Center
%  {\sc Matheon}}
% \nopreprint{1817}   % Preprint-Nummer 
% \nopreyear{2013} % Jahr des Preprints
% \selectlanguage{english}% hier nicht veraendern, wichtig fuer Datumsformat 
%  \date{July 24, 2013}    % Datum fixieren
% % \subjclass[2010]{49Q10, 35Q74, 74N15}
%  \keywords{Lagrange approach, shape derivative, non-linear partial
%    differential equations, material derivative}

%\maketitle

\begin{abstract}
In this paper, we study shape functions depending on closed 
submanifolds.
We prove a new structure theorem that establishes the general structure of the shape derivative 
for this type of shape function. 
As a special case we obtain the 
classical Hadamard-Zol\'esio structure theorem, but also 
the structure theorem for cracked sets can be recast into our framework. 
As an 
application we investigate several unconstrained shape functions arising from 
differential geometry and fracture mechanics.
\end{abstract}

%\tableofcontents

%  \maketitle

\section{Introduction}
The classical structure theorem \cite{MR2731611,Delfour1992}  for real valued  shape functions plays a crucial role in 
shape optimization both from the numerical and the theoretical point of view. 
Given a shape function $J$, the structure theorem  states that the 
 shape derivative $X\mapsto dJ(\Omega)(X)$ at an open or closed  set $\Omega$ has support in the boundary 
$\partial \Omega$. This is a consequence of Nagumo's invariance theorem for 
ordinary differential equation. If the boundary of $\Omega$ 
is additionally of class $C^{k+1}$, $k\ge 0$, and  $X\mapsto dJ(\Omega)(X)$ is  linear and continuous,
then it can be shown that there is a linear and continuous function $g:C^k(\partial \Omega) \rightarrow \R$
such that
\ben\label{structure_1}
dJ(\Omega)(X) = g(X|_{\partial \Omega}\cdot \nu),
\een
where 
$\nu$ a normal vector field  along $\partial \Omega$.

In \cite{MR1803575} the structure theorem 
was extended to subsets $\Omega$ of the plane that  have a (smooth) 
fissure/crack of codimension one. A smoothly cracked set $\Omega$ in the plane
is a smooth set  $\Omega$ 
from which we remove the image $\Sigma := \gamma([0,1])$ of an embedded 
$C^{k+1}$ curve $\gamma:[0,1]\rightarrow \R^2$. In other words 
$\Omega := \tilde \Omega \setminus \Sigma$. 
The set $\tilde \Omega$ is no longer of class $C^{k+1}$
and hence the classical structure theorem does not apply.  
However, it can be shown that in this case the structure of the shape 
derivative is 
\ben\label{structure_2}
dJ(\Omega)(X) =  h(X|_{\Sigma}\cdot \mathfrak n) +  a \gamma'(0)\cdot X_{\gamma(0)} + b \gamma'(1)\cdot X_{\gamma(1)} ,
\een
where $h:C^k(\Sigma) \rightarrow \R^2$ is linear and continuous and 
$a,b$ are two real numbers and $\mathfrak n$ denotes the normal vector field 
along $\Sigma$.  In  \cite{laurain:tel-00139595} this 
theorem was extended to sets $\Omega\subset \R^d$, but still with a crack 
of codimension one.

Recently, in \cite[p. 3 Theorem 1.3]{LamPierr06} it was shown that 
if $\Omega$ has merely finite perimeter, then it is still possible to obtain a 
formula like \eqref{structure_1}. But it is clear  that in this case a normal 
vector field is not readily available anymore.  
However, one can use one of the  generalisations of the normal vector field from  geometric measure theory.
Then the structure theorem reads
\ben\label{structure_3}
 dJ(\Omega)(X) =  \mathfrak g(\VV|_{\Gamma^*}\cdot \nu_*)
\een
where $\nu_*$ is the generalized normal and $\Gamma^*:=\partial^*\Omega$ denotes the reduced boundary of $\partial \Omega$.
The cracks and corners are hidden in the notion of generalised normal and the 
function $\mathfrak g$ is  defined on a bigger space than $C^k(\Gamma)$. 

In this paper, we prove a structure theorem for shape functions defined on closed submanifolds of $\R^d$ with 
or without boundary. 
As a first side  product we are now able to  extend the structure theorem of  
\cite{laurain:tel-00139595} to arbitrary codimensions of cracks. 
A second striking consequence is that our
new structure theorem gives the structure of many other
functionals occuring in differential geometry. 
 The proof is very different from the one given in  
\cite{laurain:tel-00139595} and thus also contributes in giving a new 
perspective on the subject. 

In Section~\ref{sec:1}, we briefly recall some facts about  submanifolds with 
boundary and introduce shape functions and the Eulerian semi-derivative. 

In Section~\ref{sec:2}, we give a detailed reinterpretation of  Nagumo's 
invariance condition for the case of submanifolds.
This version requires some notions from differential geometry.

In Section~\ref{sec:3}, we are going to revisit the structure theorem for smooth domains and give a slightly different proof, than 
what is known in the literature as this will be useful for our further study. 

In Section~\ref{sec:4}, the main result is proved by first studying a general 
splitting of vector fields on submanifolds.

In Section~\ref{sec:5}, we are presenting several examples. 
 
\section{Preleminaries}\label{sec:1}
\subsection{Submanifolds of $\R^d$ with boundary}
We begin with the definition of a submanifold $M$ of $\R^d$, $d\ge 1$.  Let us denote the open half space in $\R^d$ by 
$$\mathbb H^d:=\{x\in \R^d| x=(x_1,\ldots, x_d),\, x_d>0\}.$$
The boundary of the half space $\partial \mathbb H^d= \R^{d-1}\times \{0\}$
is identified with $\R^{d-1}$. When $U$ is an open subset of  
$\bar\Hs^d := \overline{\Hs^d} = \R^{d-1}\times [0,\infty)$, then we define 
its interior and boundary as
$\text{int}(U ) := U \cap  \mathbb H^d$ and 
$\partial U := U \cap \partial \mathbb H^d$, respectively. Note that the  
boundary $\partial U$ does not 
coincide with the topological boundary of $U$. 

\begin{defin}
Let $1\le m \le d$. A subset $N$ of $\R^d$ is called {\bf $n$-dimensional $C^k$- 
submanifold of} $\R^d$, $k\ge 1$, if 
for each point $p$ in $N$ there is an open set $U$ of $\R^d$ containing $p$, 
an open set $V$ of $\R^d$, and a $C^k$-diffeomorphism $\varphi:U \rightarrow V$,  such that
$$ \varphi(U\cap N)  = V \cap (\R^m\times \{{\bf 0}\}).$$
Here, ${\bf 0 } = (0,\ldots, 0)^\top\in \R^{d-m}. $ The tuple $(U\cap N,\varphi)$ is called 
{\bf chart} and $\varphi^{-1}$ is called {\bf parametrisation}.
\end{defin}

%\begin{defin}
%Let $0\le l \le m$. A subset $M$ of $N$ is called {\bf$m$-dimensional submanifold of} M, if for 
%each $p$ in $M$ there is a chart $(\varphi,U)$ of $N$ around $p$ such that  
%$$ \varphi(M\cap U) = \varphi(U)\cap(\R^l\times \{0\}). $$
%The number $m-l$ is called {\bf codimension of} $M$ in $N$.
%\end{defin}

\begin{defin}[Submanifolds with boundary]
A subset $M$ of a $n$-dimensional $C^k$-submanifold $N$ is called $m$-{\bf dimensional 
$C^k$-submanifold of $N$ with boundary} if for every $p$ in $M$ there is a chart $(U,\varphi)$ 
of $N$ around $p$ , such that
$$ \varphi(U\cap M)  = \varphi(U) \cap (\bar\Hs^m\times \{{\bf 0}\}) \subset \R^n.$$
Here, $p$ is called {\bf boundary point} if $\psi(p)$ lies in $ \partial  \Hs^m := \partial  \Hs^m \times \{{\bf 0}\}$. The set of boundary points is denoted by 
$\partial M$ and we define the {\bf interior} of $M$ by $\text{int}(M):= M\setminus \partial M$. 
In order to avoid any confusion we are going to denote by $\partial M$ the boundary in the above sense and by $\fr(M)$ ($\fr$ = frontier) the topological boundary of the set $M$.
\end{defin}

\begin{rems}
\begin{itemize}
\item[(a)] The boundary $\partial M$ is a $m-1$-dimensional submanifold without boundary, that is, $\partial(\partial (M)) = \emptyset$.
 The interior $\text{int}(M)$ is a $m$-dimensional 
submanifold without boundary.
\item[(b)] Note that the image $\varphi(U\cap M)$ is not open in $\R^d$, but 
relatively open.
\item[(c)] Open subsets $U$ of $\R^d$ are $C^\infty$-submanifolds {\bf without} boundary.
           (Note that they do have a topological boundary.)
\item[(d)] One and two dimensional submanifolds of $\R^d$ are called {\bf 
	embedded curve} and {\bf embedded surface}, respectively.  Analogously  $d-1$ dimensional 
	submanifolds of $\R^d$ are 
{\bf embedded hypersurfaces}.
\item[(f)] It is always possible to replace the open set $U$ by another open 
	set $\tilde U$  in such a way that
$\varphi(\tilde U)$ is the unit ball $\mathbb B^d$ in $\R^d$ centered at the origin. 
\end{itemize}
\end{rems}

We introduce the tangent space 
at a point $p$ of $M$
by $ T_pM := d_{\varphi(p)}(\varphi^{-1})(\R^m) $ and similarly the tangent space of $\partial M$ at a point $p$ 
is given by $ T_p(\partial M) := d_{\varphi(p)}(\varphi^{-1})(\R^{m-1}). $
This can also  be expressed in a different way by ($q=\varphi(p)$)
\begin{align*} 
T_pM & = \text{span}\{ \partial_{x_1}\varphi^{-1}(q), \ldots,  \partial_{x_m}\varphi^{-1}(q)\}\\
T_p(\partial M) & = \text{span}\{ \partial_{x_1}\varphi^{-1}(q), \ldots,  \partial_{x_{m-1}}\varphi^{-1}(q)\}. 
\end{align*}
Here, $\R^m\subset \R^d$ has to be understand as the 
image of the natural injection 
$ x \mapsto (x, 0) \in \R^d $.
Setting $T_p^{\pm}M := d_{\varphi(p)}(\varphi^{-1})(\pm \bar\Hs^m)$, we have 
for $p\in M$ that $T_pM = T_p^{+}M\cup T_p^{-}M$ and $T_p(\partial M) = T_p^{+}M\cap T_p^{-}M$.
We call the disjoint collection $TM := \cup_{p\in M} T_pM$ of tangent spaces 
also tangent bundle of  $M$.  The tangent bundle is a smooth $2m$-dimensional manifold if $M$ is smooth. 
Similarly, $T(\partial M)$ denotes the $2(m-1)$-dimensional tangent bundle at $\partial M$. 
Note that for $p\in \partial M$ the tangent space $T_p(\partial M)$ is a $m-1$-dimensional subspace of the $m$-dimensional vector space $T_pM$. As such $T_p(\partial M)$ is also an inner product space with Euclidean scalar product of $\R^d$. Consequently there are exactly two 
unit vectors 
$\pm\nu(p)$ in $T_pM$ that are normal  to $T_p(\partial M)$. We call $\nu$ 
outward-pointing unit vector field if for all $p\in \partial M$,
$\nu(p)\in T^+_pM$; cf. \cite{AmmEschIII}. 
In the sequel, we always denote the outward-pointing unit normal field  by $\nu$.  Its  uniqueness and existence
along $\partial M$ is guaranteed by  \cite[p. 346 
Prop. 13.26]{Lee}.

\subsection{Eulerian semi-derivative}
Let $X:\R^d\rightarrow \R^d$ be a vector field 
satisfying a global Lipschitz condition: there is a constant $L>0$ 
such that 
$$ |X(x)-X(y)| \le L|x-y| \quad \text{ for all } x,y\in \R^d .$$ 
Then  we 
associate with $X$ the flow $\Phi_t$ by solving for all $x\in \R^d$
$$ \frac{d}{dt} \Phi_t(x) = X(\Phi_t(x)) \;\text{ on } [-\tau,\tau], \quad \Phi_0(x)=x.$$
The global existence of the flow is ensured by the theorem of  Picard-Lindel\"of and hence 
$\Phi:\R\times \R^d \rightarrow \R^d$.

Subsequently, we restrict ourselves to a special class  of vector fields, namely 
$C^k$-vector fields with compact support in some fixed set. 
To be more precise for a fixed  open set $D\subset \R^d$, we consider vector 
fields belonging to  $C^k_c(D,\R^d)$.
We equip the space $C^k_c(D,\R^d)$ respectively $C^\infty_c(D,\R^d)$ with the topology induced 
by the following  family of semi-norms: for each compact $K\subset D$ and muli-index 
$\alpha\in \N^d$ with $|\alpha| \le k$ we define
$ \|f\|_{K,\alpha} := \sup_{x\in K} |\partial^\alpha f(x) |. $
With this familiy of norms the space $C^k_c(D,\R^d)$ becomes a locally convex 
vector space.  

Next, we recall the definition of the Eulerian semi-derivative.

%%%%%%%%%%%%%%%%%%%%%%%%%%%%%%
\begin{defin}\label{def1} 
Let $D\subset \R^d$ be an open set.  Let $J:\Xi \rightarrow \R$ be a shape function defined on a set $\Xi$ of subsets
of $D$ and fix  $k\ge 1$. Let $\Omega\in \Xi$ and $X \in C^k_c(D,\R^d)$ be such that 
$\Phi_t(\Omega) \in \Xi$ for all 
$t >0$ sufficiently small.   
Then the  Eulerian semi-derivative of $J$ at $\Omega$ in direction $X$ is defined by
\ben
dJ(\Omega)(X):= \lim_{t \searrow 0}\frac{J(\Phi_t(\Omega))-J(\Omega)}{t} .
\een
\begin{itemize}
\item[(i)] The function $J$ is said to be \textit{shape differentiable} at $\Omega$ if  $dJ(\Omega)(X)$ exists for all $X \in C^\infty_c(D,\R^d)$ and 
$ X     \mapsto dJ(\Omega)(X) $ 
is linear and continuous on $C^\infty_c(D,\R^d)$.
\item[(ii)]  The smallest integer  $k\ge 0$ for which $X \mapsto dJ(\Omega)(X)$ is continuous with respect to the $C^k_c(D,\R^d)$-topology is called the order of $dJ(\Omega)$. 
\end{itemize}
\end{defin}
%%%%%%%%%%%%%%%%%%%%%%%%%%%%%%%%%%%%%%%%%%%%%%%%%%%%%%%
The set $D$ in the previous definition is usually called hold-all domain or 
hold-all set or universe.

\subsection{Quotient space}\label{sec:quotient_space}
 Henceforth, for all structure theorems to be considered,  we define for an arbitrary set $A\subset D$ the linear space
\ben\label{def:vector_zero}
 T^k(A):= \{X\in  C^k_c(D,\R^d)| \, X = 0 \text{ on } A \}. 
 \een
By definition $T^k(A) \subset C^k_c(D,\R^d)$ and $T^k(A)$ is closed. 
 We introduce an equivalence relation on 
$ C^k_c( D,\R^d)$ by
\ben\label{eq:equiv_rela}
 X\sim Y, \quad  X,Y\in  C^k_c(D,\R^d) \quad \Leftrightarrow \quad X=Y \; \text{ on } A 
 \een 
and denote the set of equivalence classes and its elements by 
$\mathcal Q^k(A) := C^k_c( D,\R^d)/T^k(A)$
 and $[\VV]$, respectively.

We denote by $\mathfrak J_A$ the restriction mapping of vector field 
belonging to $C^k_c(D,\R^d)$ to mappings $A\rightarrow \R^d$, that is,  
$$\mathfrak J_A: C^k_c(D,\R^d) \rightarrow A^{\R^d},\quad  X\mapsto X_{|A},$$
where $A^{\R^d}$ denotes the space of all mappings from $A$ into $\R^d$.
The mapping $\mathfrak J_A$ induces the mapping 
$\tilde{\mathfrak{J}}_A:Q^k(A) \rightarrow \R$ as
depicted in Figure \ref{diagram-1}. 
Hence by definition $\mathfrak J_A =  \tilde{\mathfrak{J}}_A\circ \pi  $. 

\begin{figure}
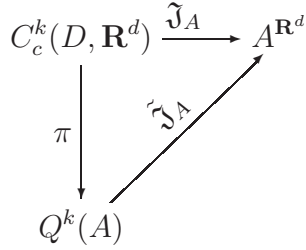
\label{diagram-1}
\begin{diagram}
	C^k_c(D,\R^d) & \rTo^{\mathfrak J_A}  & A^{\R^d} \\
	\dTo^{\pi} &      \ruTo^{\tilde{\mathfrak{J}}_A}                  &  \\
 Q^k(A)       &                           & 
\end{diagram}
\caption{Restriction mapping $\mathfrak J_A$ and induced mapping 
	$\tilde{\mathfrak{J}}_A$. }
\end{figure}

 The semi-norms on $C^k_c(D,\R^d)$ induce 
 semi-norms on the  quotient space
 $$ \|[\VV]\|_{K,\alpha}:= \inf_{\tilde X\in T^k(A)} \|X-\tilde X\|_{K,\alpha} = \inf_{\xi\in C^k_c( D,\R^d)}\{\|\xi\|_{K,\alpha}:\, \xi = \VV \text{ on } A \}.  $$
Let $f:C^k_c(D,\R^d) \rightarrow \R$ be a linear function respecting the equivalence 
relation \eqref{eq:equiv_rela}, that is, if $X\sim Y$ then it follows 
$f(X)=f(Y)$. Then  $f:C^k_c(D,\R^d) \rightarrow \R$ is continuous if and 
only if its induced function $\tilde f: Q^k(A) \rightarrow \R $ is continuous. 
So if $f$ is continuous, then for every compact $K\subset \R^d$ there is a constant $C>0$ 
such that for all  $\psi\in T^k(A)$ and all multi-indicies $\alpha$ with 
$|\alpha| \le k$ we have 
$$ |\tilde f([X])| =   |f(X)|=|f(\VV-\psi)| \le C \|\VV-\psi\|_{K,\alpha} $$
 and hence $ |\tilde f([X])| \le C\|[X]\|_{K,\alpha}$. 
 Later we will see that the shape derivative $dJ(\Omega)$ in an open or closed 
 set $\Omega\subset \R^d$ respects the above equivalence relation.  This 
 will follow from Nagumo's theorem considered in the next section. 

\section{Nagumo's theorem}\label{sec:2}

\subsection{Nagumo's invariance condition}
Nagumo's theorem states roughly the following: if a given vector field defined
on some closed subset of $\R^d$ is tangent to that set at each point,
then the solutions of the 
associated ordinary differential equation cannot 
leave this closed set. 

In order to make this tangency requirement precise, we define for a given
subset $K\subset \R^d$ the  Bouligand contingent cone 
to $K$ at $x\in \overline K$:
$$
T_K(x):=\{v\in \R^d|\;v =  \lim_{n\rightarrow \infty}(x_n-x)/t_n\text{ for some } x_n\rightarrow_D x,\; t_n\searrow 0\}.
$$
Here $x_n\rightarrow_D x$ indicates that $x_n\in D$ and 
$x_n \rightarrow x$ as $n\rightarrow \infty$.
The following result is Nagumo's classical theorem \cite[Theorem 2, p. 180]{AubCell}; cf. also  \cite{MR0015180, MR2731611}.
\begin{thm}\label{thm:nagumo}
	Let $K$ be a closed subset of a Hilbert space $H$ and $f$ a
	continuous function from $K$ into $H$ satisfying the tangential
	condition
	$ \forall x\in K, \; f(x) \in T_K(x). $
	Then for each $x_0\in K$, there exists $T>0$ such that
	the ODE $x'(t) = f(x(t)), x(0)=x_0$ has a viable trajectory on
	$[0,T]$. 
\end{thm}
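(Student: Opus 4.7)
The plan is a standard Euler-polygon approximation argument. First I would reformulate the tangency condition $f(x) \in T_K(x)$ in the quantitatively usable form
$$\lim_{h \searrow 0} \frac{\text{dist}(x + h f(x),\, K)}{h} = 0 \quad \text{for every } x \in K,$$
which, since $K$ is closed, is equivalent to the contingent-cone definition given above (the sequences $x_n \to x$ in $K$ realizing $v \in T_K(x)$ can be taken to be, and replaced by, nearest-point realisations of $\text{dist}(x + t_n v, K)$). By continuity of $f$, pick $r>0$ so that $M := 1 + \sup\{|f(x)| : x\in K,\; |x-x_0|\le r\}$ is finite, and set the target horizon $T := r/(2M)$.

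Next, for each integer $n\ge 1$ I would construct a piecewise-affine approximant $u_n : [0,T] \to H$ with all breakpoints lying in $K$. Set $x_0^n := x_0$, $t_0^n := 0$. Given $x_k^n \in K$ with $t_k^n < T$, the quantitative tangency and continuity of $f$ let me pick a step $h_k^n \in (0, 1/n]$ and a point $x_{k+1}^n \in K$ with
$$|x_{k+1}^n - (x_k^n + h_k^n f(x_k^n))| \le h_k^n/n;$$
then declare $t_{k+1}^n := t_k^n + h_k^n$ and let $u_n$ be the affine interpolant of $x_k^n$ and $x_{k+1}^n$ on $[t_k^n,t_{k+1}^n]$. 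By construction the polygonals are uniformly Lipschitz with constant at most $M$ and stay inside the closed ball of radius $r$ about $x_0$ on $[0,T]$.

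Now I would pass to the limit. Arzel\`a--Ascoli (combined with the precompactness assumptions implicit in Aubin--Cellina's setting) extracts a subsequence converging uniformly on $[0,T]$ to a continuous $u$. Because every $x_k^n$ lies in $K$ and every chord is at distance at most $h_k^n/n \le 1/n^2$ from $K$, the uniform limit $u$ takes its values in the closed set $K$. From the identity
$$u_n(t) = x_0 + \int_0^t f(u_n(s))\,ds + R_n(t),$$
where $R_n(t)$ collects the discretisation defect (the cumulative correction terms together with $\int f(u_n)-f(x_k^n)\,ds$ over each interval), the uniform continuity of $f$ on the compact tube traced by the polygonals and the bound $h_k^n \le 1/n$ give $\|R_n\|_\infty \to 0$. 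Letting $n\to\infty$ yields $u(t) = x_0 + \int_0^t f(u(s))\,ds$ with $u(t)\in K$, which is the viable trajectory.

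The main obstacle is to ensure that the scheme genuinely advances time up to the uniform horizon $T$ rather than stalling, i.e.\ that the admissible step sizes $h_k^n$ do not collapse too quickly. The imposed upper bound $h_k^n \le 1/n$ forbids degeneration on the upper side, while the Lipschitz estimate $|x_k^n - x_0| \le (M+1/n)\, t_k^n$ keeps $x_k^n$ inside the ball of radius $r$ for all $t_k^n \le T$, so the tangency condition always supplies one more admissible step; a compactness/cardinality argument on the set of iterations then confirms $t_k^n \to T$. A secondary subtlety, relevant only in genuinely infinite-dimensional $H$, is the precompactness required by Arzel\`a--Ascoli, which is why one imposes additional hypotheses in that generality (e.g.\ a compactness condition on $f$ or local finite-dimensionality of $K$).
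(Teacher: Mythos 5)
The paper does not prove this theorem at all: it is quoted from Aubin--Cellina \cite{AubCell} and used as a black box, so there is no in-house argument to compare against. Your Euler-polygon scheme is the standard proof of the Nagumo/viability theorem, and its architecture --- quantitative reformulation of tangency, polygonal approximants with nodes in $K$, Arzel\`a--Ascoli, passage to the limit in the integral equation --- is the right one. One small correction at the outset: the Bouligand condition $f(x)\in T_K(x)$ is equivalent to $\liminf_{h\searrow 0}\operatorname{dist}(x+hf(x),K)/h=0$, not to the full limit being zero (that stronger statement characterises the adjacent cone). This does not hurt you, since the construction only needs \emph{one} admissible step in $(0,1/n]$ at each node, but the reformulation should be stated with $\liminf$.

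Two points need real attention. First, the step you yourself call the main obstacle is the crux of the whole proof, and ``a compactness/cardinality argument on the set of iterations'' is not yet an argument. The largest admissible step $h(x)$ supplied by tangency is not lower semicontinuous in $x$, so there is no uniform positive lower bound on the $h_k^n$, and the partial sums $t_k^n$ could a priori accumulate at some $\theta<T$. The standard repair: if they do, the nodes $x_k^n$ are an $M$-Lipschitz image of the $t_k^n$, hence Cauchy, hence converge to some $x^*\in K$ still inside the ball of radius $r$; tangency and continuity of $f$ at $x^*$ then allow the construction to be restarted at $(\theta,x^*)$, and a Zorn's-lemma (or transfinite induction) argument on partial constructions shows the scheme reaches $T$. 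You should write this out; without it the ``uniform horizon'' is unproved. Second, your closing caveat about Arzel\`a--Ascoli is not a secondary subtlety but a missing hypothesis in the theorem as the paper states it: for an arbitrary closed $K$ in an infinite-dimensional Hilbert space with $f$ merely continuous the statement is \emph{false} --- take $K=H$, so the tangency condition is vacuous, and recall that Peano existence fails in every infinite-dimensional Banach space (Godunov; earlier counterexamples by Dieudonn\'e and Yorke). Aubin--Cellina assume $K$ locally compact, which is automatic in the $\R^d$ setting where the paper actually applies the result (Corollary~\ref{cor:nagumo}); your proof should carry that hypothesis explicitly so that the pointwise sets $\{u_n(t)\}_n$ are relatively compact and Arzel\`a--Ascoli applies.
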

By ``viable solution'' we  means that $x(t)\in K$ for all $t\in [0,T]$. 
\begin{cor}\label{cor:nagumo}
Let $K\subset \R^d$ be a closed set and $X:K\rightarrow \R^d$ a vector field 
satisfying a global Lipschitz condition. Assume that for all  $x\in K$ we have $\pm X(x)\in T_K(x).$
Then the flow $\Phi_t$ of $X$ is for each 
	$t$ in $[-\tau,\tau]$ a bijection from $K$ onto $K$. In particular, 
	$\Phi_t(K) =K $ for all $ t\in \R .$
\end{cor}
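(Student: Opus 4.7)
The plan is to combine Nagumo's theorem, applied once to $X$ and once to $-X$, with the uniqueness of Lipschitz ODE solutions to show that $\Phi_t$ maps $K$ into itself in both time directions, and then to derive bijectivity from the flow/group property. Since $X$ is globally Lipschitz on $K$, I first extend it (for instance by McShane's lemma) to a globally Lipschitz field on $\R^d$, so that Picard--Lindel\"of produces a globally defined flow $\Phi : \R \times \R^d \to \R^d$ satisfying $\Phi_{t+s} = \Phi_t \circ \Phi_s$ and $\Phi_0 = \id$.

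Next I would establish forward invariance $\Phi_t(K) \subset K$ for all $t \ge 0$. Fix $x_0 \in K$. Since $X(x_0) \in T_K(x_0)$, Theorem~\ref{thm:nagumo} furnishes a viable trajectory $y : [0,T] \to K$ with $y'(t)=X(y(t))$ and $y(0)=x_0$. By uniqueness for Lipschitz ODEs, $y(t) = \Phi_t(x_0)$ on $[0,T]$, so $\Phi_t(x_0) \in K$ on that interval. Let
\[
T^* := \sup\{\,T \ge 0 : \Phi_t(x_0) \in K \text{ for all } t \in [0,T]\,\}.
\]
Suppose for contradiction $T^* < \infty$. Because $K$ is closed and $t \mapsto \Phi_t(x_0)$ is continuous, $\Phi_{T^*}(x_0) \in K$. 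Applying Nagumo at the initial point $\Phi_{T^*}(x_0) \in K$ and invoking uniqueness again, $\Phi_t(x_0)$ stays in $K$ on $[T^*, T^* + \delta]$ for some $\delta > 0$, contradicting maximality. Hence $T^* = \infty$ and $\Phi_t(K) \subset K$ for every $t \ge 0$.

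For the backward direction, I apply the identical argument to the field $-X$, whose flow equals $\Phi_{-t}$ and for which the tangential condition $-X(x) \in T_K(x)$ likewise holds by assumption. This yields $\Phi_{-t}(K) \subset K$ for all $t \ge 0$, i.e.\ $\Phi_t(K) \subset K$ for every $t \in \R$.

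Finally, bijectivity of $\Phi_t : K \to K$ is immediate from the group property. Injectivity follows from $\Phi_{-t} \circ \Phi_t = \id$. Surjectivity: given $y \in K$, set $x := \Phi_{-t}(y)$, which lies in $K$ by the backward invariance just proved, and observe $\Phi_t(x) = y$. Thus $\Phi_t(K) = K$ for all $t \in \R$. The one genuinely delicate point is the bootstrapping from Nagumo's merely local viability to a statement valid on all of $\R$; this is the step I would anticipate as the main obstacle, but it is handled cleanly by the continuation argument above, which uses both closedness of $K$ and Lipschitz uniqueness to forbid the maximal invariance time from being finite.
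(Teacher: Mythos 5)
Your proof is correct and follows essentially the same route as the paper: extend $X$ to a globally Lipschitz field on $\R^d$ (the paper uses Kirszbraun rather than McShane, which is immaterial here), apply Nagumo's theorem to $X$ and to $-X$ to get invariance of $K$ in both time directions, and conclude bijectivity from the group property of the flow. Your write-up is in fact slightly more careful than the paper's on two points it leaves implicit — identifying Nagumo's viable trajectory with the flow via Lipschitz uniqueness, and the continuation argument promoting local viability on $[0,T]$ to invariance for all $t\ge 0$ using closedness of $K$.
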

\begin{proof}
	By Kirszbraun's theorem (cf. \cite{kirszbraun,valentine1,valentine2}) we may extend the vector field $X:K\rightarrow \R^d$ 
	to a globally Lipschitz continuous  vector field 
	$\tilde X:\R^d\rightarrow \R^d$ having the same Lipschtiz  constant and 
	satisfying 
	$X=\tilde X$ on $K$. The Picard-Lindel\"of theorem ensures that the flow $\tilde \Phi_t$ is 
	globally defined, that is, $\tilde \Phi: \R \times \R^d \rightarrow \R^d$.  
	Applying Theorem \ref{thm:nagumo} to $K$ yields $\Phi_t(K) \subset K$ for all $t$ in $[0,\infty)$. On the other hand 
	we also have $\Phi_{-t}(K) \subset K$
	for all $t$ in $[0,\infty)$ as $-X(x)\in T_K(x)$ for all $x$ in $K$. Together 
	we obtain  $ K = \Phi_t(\Phi_{-t} ( K )) \subset 
	\Phi_t(K) $ for all $t\in \R$ and thus $\Phi_t(K) = K$.	
\end{proof}

\begin{cor}\label{cor:nagumo-1}
Let $D\subset \R^d$ be an open set with $C^k$-boundary, $k\ge 1$.  
Suppose that  $X:\R^d\rightarrow \R^d$
is a vector field satisfying a global Lipschitz condition and  $X\cdot \nu =0$ 
on $\fr(D)$.  Then $\Phi_t(D)=D$ and $\Phi_t(\fr(D)) = \fr(D)$ for all 
$t$ in $[-\tau, \tau]$. 
\end{cor}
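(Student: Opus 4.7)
The plan is to reduce the statement to Corollary \ref{cor:nagumo} applied to the closed set $K := \fr(D)$, and then to transfer the invariance from the boundary to $D$ by a connectedness argument.

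First I would identify the Bouligand contingent cone of $K = \fr(D)$ with its tangent space. Because $\fr(D)$ is a $C^k$-hypersurface, at every $x \in \fr(D)$ one has the standard identification
\[
T_{\fr(D)}(x) \;=\; T_x(\fr(D)) \;=\; \nu(x)^{\perp},
\]
which can be verified directly by using a local $C^1$-graph representation of $\fr(D)$ near $x$ and checking that the difference quotients $(x_n - x)/t_n$ converge exactly to vectors orthogonal to $\nu(x)$. Since $T_{\fr(D)}(x)$ is a linear subspace, the hypothesis $X(x) \cdot \nu(x) = 0$ yields $\pm X(x) \in T_{\fr(D)}(x)$ for every $x \in \fr(D)$. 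Applying Corollary \ref{cor:nagumo} to $K = \fr(D)$ (closed, and $X$ globally Lipschitz on $\R^d$, hence on $K$) I obtain $\Phi_t(\fr(D)) = \fr(D)$ for all admissible $t$.

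Next I would deduce $\Phi_t(D) = D$ from the invariance of $\fr(D)$. Fix any $x_0 \in D$ and consider the continuous curve $t \mapsto \Phi_t(x_0)$. If $\Phi_{t_1}(x_0) \notin D$ for some $t_1 \in [-\tau,\tau]$, then by continuity there is a smallest $t_0 \in (0, |t_1|]$ (up to a sign) with $\Phi_{\pm t_0}(x_0) \in \fr(D)$. Since $\Phi$ is a group of homeomorphisms (using the global Picard--Lindel\"of extension already invoked in the proof of Corollary \ref{cor:nagumo}), the identity $x_0 = \Phi_{\mp t_0}(\Phi_{\pm t_0}(x_0))$ together with $\Phi_{\mp t_0}(\fr(D)) = \fr(D)$ forces $x_0 \in \fr(D)$, a contradiction. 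Hence $\Phi_t(D) \subset D$ for every $t \in [-\tau,\tau]$, and by symmetry in the sign of $t$ the reverse inclusion $D = \Phi_t(\Phi_{-t}(D)) \subset \Phi_t(D)$ follows, giving $\Phi_t(D) = D$.

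The main technical point is the identification of the contingent cone with the classical tangent space; everything else is a direct application of Corollary \ref{cor:nagumo} and an elementary topological argument using that $\Phi_t$ is a homeomorphism. No additional regularity of $X$ beyond the global Lipschitz assumption is needed, since the tangency condition $X \cdot \nu = 0$ on $\fr(D)$ is precisely what Nagumo's framework requires.
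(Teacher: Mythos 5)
Your proof is correct. The first half --- identifying the contingent cone $T_{\fr(D)}(x)$ with the classical tangent space $\nu(x)^{\perp}$ and applying Corollary~\ref{cor:nagumo} to the closed set $\fr(D)$ --- is exactly what the paper does. Where you diverge is in passing from the invariance of $\fr(D)$ to that of $D$: the paper applies Corollary~\ref{cor:nagumo} a \emph{second} time, to the closed set $\overline D$ (the tangency condition holds trivially at interior points, where the contingent cone is all of $\R^d$, and at boundary points because $T_x(\fr(D))\subset T_{\overline D}(x)$), and then obtains $\Phi_t(D)=\Phi_t(\overline D)\setminus\Phi_t(\fr(D))=\overline D\setminus\fr(D)=D$ from the bijectivity of $\Phi_t$. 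You instead run a first-exit-time argument: a trajectory starting in $D$ that leaves $D$ must first touch $\overline D\setminus D=\fr(D)$, and flowing backwards along the invariant set $\fr(D)$ would place the initial point on $\fr(D)$, a contradiction. Both arguments are sound; the paper's is shorter once one accepts the second application of Nagumo, while yours invokes Nagumo only once and makes the topological mechanism (why an interior trajectory cannot cross an invariant boundary) explicit. One point worth stating precisely in your version: the first exit time $t_0=\inf\{t>0:\Phi_t(x_0)\notin D\}$ is positive because $D$ is open, and $\Phi_{t_0}(x_0)\in\overline D\setminus D=\fr(D)$ because $\R^d\setminus D$ is closed --- this is exactly where the openness of $D$ enters.
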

\begin{proof}
We have for all 
$x\in \fr(D)$ the inclusion $T_xD\subset T_D(x)$.  For 
all interior points $x\in D$ it is easily checked that $T_D(x) = \R^d$. 
So the assumptions of Corollary \ref{cor:nagumo} are satisfied.  
Since $D$ is open we have $ \overline D = \partial D \cup D$. Moreover, since 
	$\partial D$ is closed and $\pm X(x) \in T_{\partial D} (x) $ for all 
	$x\in \partial D$, we also have $\Phi_t(\partial D) = \partial D
	$ and it follows that $\Phi_t(D) = D$. 
\end{proof}

\subsection{Nagumo's theorem for submanifolds}
In this section we give a proof of the following version of Nagumo's theorem needed for the 
further analysis. 

\begin{proposition}\label{prop:nagumo}
	Let $M$ be a closed $m$-dimensional $C^k$-submanifold of $\R^d$, $k\ge 1$. 
	Suppose we are given a vector field  $X:\R^d \rightarrow \R^d$ 
	of class $C^1$ with compact support satisfying
	\begin{align}
	X_p  & \in T_p(\text{int}(M)), \quad \text{ for all } p \in T_p(\text{int}(M)), \label{con:1}\\
	X_p  & \in T_p (\partial M), \quad \text{ for all } p \in \partial M. \label{con:2}
	\end{align}
	Then the flow $\Phi_t=\Phi_t^X$ of $X$ is a $C^k$-diffeomorphism 
	$\Phi_t: M\rightarrow M$ and thus in particular
	\begin{align} 
	\Phi_t(\text{int}(M)) & =\text{int}(M) \quad \text{ for all } t \\
	\Phi_t(\partial M) & = \partial M \quad \text{ for all } t. 
	\end{align}
\end{proposition}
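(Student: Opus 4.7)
The plan is to reduce the proposition to the already-proved Corollary 3.2 by checking that conditions (3.4)--(3.5) translate into the Bouligand tangential hypothesis $\pm X(x)\in T_K(x)$ required there, applied separately to the two closed sets $K=M$ and $K=\partial M$; the diffeomorphism conclusion will then follow from standard smoothness of the global flow. Before doing this I would note that $\partial M$ is closed in $\R^d$: locally at any $p\in\partial M$, $\partial M$ is the preimage under a chart of the closed set $\partial\Hs^m\times\{\mathbf 0\}$, so $\partial M$ is relatively closed in $M$, which is itself closed in $\R^d$ by hypothesis.

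The key step is to identify the contingent cones with the geometric objects from Section~\ref{sec:1}. For an interior point $p\in\text{int}(M)$, a local parametrisation shows $T_M(p)=T_pM$, which is a linear subspace of $\R^d$, so (3.4) gives $\pm X_p\in T_M(p)$. For a boundary point $p\in\partial M$, the corresponding chart sends $M$ to $\bar\Hs^m\times\{\mathbf 0\}$ and $\partial M$ to $\partial\Hs^m\times\{\mathbf 0\}$, and a direct computation gives $T_M(p)=T_p^+M$ together with $T_{\partial M}(p)=T_p(\partial M)$. Condition (3.5) then places $X_p$ in $T_p(\partial M)=T_p^+M\cap T_p^-M$, so that $\pm X_p$ belongs both to $T_M(p)$ and to $T_{\partial M}(p)$. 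Applying Corollary \ref{cor:nagumo} twice---once with $K=M$, once with $K=\partial M$---yields $\Phi_t(M)=M$ and $\Phi_t(\partial M)=\partial M$ for all $t\in\R$; since $\Phi_t$ is a bijection of $\R^d$, the identity $\Phi_t(\text{int}(M))=\Phi_t(M\setminus\partial M)=M\setminus\partial M=\text{int}(M)$ follows by a set-theoretic difference.

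It remains to upgrade this set-level statement to a diffeomorphism in the manifold sense. The Picard--Lindel\"of flow $\Phi_t:\R^d\to\R^d$ is a diffeomorphism of $\R^d$ of the same regularity as $X$, and its restriction to the $C^k$-submanifold $M$ stays of that regularity when read in charts of $M$, since composition with chart maps preserves smoothness; the inverse $\Phi_{-t}|_M$ is of the same type, so $\Phi_t|_M:M\to M$ is a $C^k$-diffeomorphism. The step I expect to be most delicate is the identification $T_M(p)=T_p^+M$ at boundary points: one must check that every limit $v=\lim_n(x_n-p)/t_n$ with $x_n\in M$, $t_n\searrow 0$ is an inward-pointing tangent vector, and conversely that every such vector is realised. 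This is done by pulling sequences back to $\bar\Hs^m$ through a boundary chart and exploiting convexity of the closed half-space; the rest of the argument is essentially bookkeeping.
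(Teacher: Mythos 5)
Your proof is correct, but it follows a genuinely different route from the paper's. The paper proves Proposition~\ref{prop:nagumo} directly in charts, without invoking Theorem~\ref{thm:nagumo} or Corollary~\ref{cor:nagumo}: for a trajectory $\alpha'=X(\alpha)$ starting at $p\in\partial M$ it sets $\tilde\alpha=\varphi\circ\alpha$ and observes that, since $X$ along $\partial M$ is a combination of the first $m-1$ pushforward basis vectors, the transformed field $\tilde X$ has vanishing last $d-m+1$ components, so those coordinates of $\tilde\alpha$ are constant and the trajectory stays in $\partial M$ (similarly for interior points); the global identities $\Phi_t(\partial M)=\partial M$ and $\Phi_t(\mathrm{int}(M))=\mathrm{int}(M)$ then follow from the same $\Phi_t\circ\Phi_{-t}$ trick you use. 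Your reduction to Corollary~\ref{cor:nagumo} via the cone identifications $T_M(p)=T_pM$ at interior points and $T_M(p)=T_p^+M$, $T_{\partial M}(p)=T_p(\partial M)$ at boundary points is exactly the alternative the author sketches in the remark immediately following the proof, where \eqref{con:1}--\eqref{con:2} are shown to be reformulations of $\pm X_p\in T_M(p)$ for all $p\in M$; a direct proof was preferred there only for self-containedness. Your route buys brevity (Kirszbraun, Picard--Lindel\"of and the bijectivity argument are already packaged in Corollary~\ref{cor:nagumo}, and in fact you only need the easy inclusions $T_p(\partial M)\subset T_M(p)$ and $T_p(\partial M)\subset T_{\partial M}(p)$, not the full identification $T_M(p)=T_p^+M$ that you flag as the delicate step), at the price of the cone computations; the paper's route is longer but never touches contingent cones. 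Note finally that the upgrade to a $C^k$-diffeomorphism is treated loosely in the paper as well (``the rest of the statement is clear''), and with $X$ merely $C^1$ the flow is only $C^1$, so neither argument really delivers more than $C^{\min(1,k)}$ regularity of $\Phi_t|_M$; this is an issue with the statement rather than with your proof.
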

\begin{proof}
	We first show that for each $p$ in $ M$ 
	and each curve $\alpha$ solving 
	$$ \alpha'(t) = X(\alpha(t)) \; \text{ in } [-\tau,\tau], \; \alpha(0)=p \quad \Longrightarrow \quad  \alpha(t)\in  M \text{ for all } t \text{ in } [-\tau,\tau].$$
	For each $p$ in $\partial M$, there exist an open neighboorhood $U$ of 
	$p$ in $\R^d$, an open set $V$ in $\R^d$ 
	and  $C^k$-diffeomorphism $\varphi:U\rightarrow V$, such that 
	$ \varphi(U\cap \partial M ) = V \cap (\R^{m-1}\times\{ {\bf 0}\} ). $
  Let $\alpha$ solve 
	$ \alpha'(t) = X(\alpha(t)) $, $\alpha(0) = p$, and define 
	$\tilde \alpha(t):= \varphi(\alpha(t))$ and
	$$
	\tilde \alpha(t) := \varphi(\alpha(t)), \quad  \tilde X(y)  :=\varphi^{-*} 
	( \partial \varphi X)(y). $$
	Then we compute 
	\ben\label{eq:nagumo_locally}
	\begin{split}		
		\tilde \alpha ' (t)  & = \partial \varphi(\alpha(t)) \alpha'(t) \\
		& =   \partial \varphi(\alpha(t)) X(\alpha(t))\\
		& = \tilde X(\tilde \alpha(t))
	\end{split}
	\een
	for all $t$. 
	We have that 
	$\{v_{i,p} := d \varphi^{-1}_{\varphi(p)}(e_i)\}$, $i=1,\ldots, m-1$ is a basis of 
	$T_p(\partial M)$ and thus we may write locally 
	$$ X_p = \sum_{i=1}^{m-1} \alpha_{i,p} v_{i,p} \quad \Rightarrow \quad \tilde X\circ \varphi =  d_p\varphi( X) = \sum_{i=1}^{m-1} \alpha_{i,p} e_i,    $$
	where $\{e_1,\ldots, e_{m-1}\}$ denotes the canonical basis of $\R^{m-1}$. 
	But his means that the last $d-m+1$ components of $\tilde{X}$ are zero.  In view of  \eqref{eq:nagumo_locally} we obtain
	$  \tilde \alpha_{m,p}' = \cdots = \tilde \alpha_{d,p}' =0 $ and 
	taking into account the initial condition we get  
	$ \tilde \alpha_{m,p}= \cdots = \tilde \alpha_{d,p} =0$.  
	Define the subinterval $\mathcal T: = \{t: \alpha(t) \in U\cap \partial M\}$ of $[-\tau,\tau]$.  We obtain 
	$$      \tilde \alpha(t) \in \varphi(U)\cap (\R^{m-1}\times \{{\bf 0} \})\quad  \text{ for all } t\in \mathcal T, 	$$
	which is equivalent to 
	$  \alpha(t) \in \partial M$ for all $ t\in \mathcal T.$ 
	This shows that the curve stays on the boundary of  $M$ as long we are 
	in the chart $U$. However, if we enter another chart, we can proceed the 
	same argumentation as above and obtain $\mathcal T = [-\tau,\tau]$.  
	In a similar way, 
	we may show that if $p\in \text{int}(M)$ and 
	$\alpha:[-\tau,\tau] \rightarrow M$ is a  $C^1$-curve, 
	such that $p=\alpha(0)$, then $\alpha([-\tau,\tau]) \subset \text{int}(M)$. 
	It follows $\Phi_t(\partial M)\subset \partial M$ and 
	$\Phi_{-t}(\partial M) \subset\partial M$, which implies  
	$ \partial M = \Phi_t(\Phi_{-t}( \partial M)) \subset \Phi_t(\partial M)  $
	and thus $\Phi_t(\partial M) = \partial M$ for all $t$.  In the same we obtain 
	$\Phi_t(\text{int}(M)) = \text{int}(M)$ for all $t$. Now the rest of the statement 
	is clear.  
\end{proof}

\begin{rem}
	The invariance $\Phi_t(M) = M$ can also be proved by directly using
	Theorem \ref{thm:nagumo}. It can be shown that for all 
	$p\in \text{int}(M)$
	$$ \pm X_p \in T_{\text{int(M})} (x) \quad \Leftrightarrow \quad  X_p\in T_p(\text{int}(M)) $$
	and for all $p\in \partial M$
	$$ \pm X_p \in T_{M}(x) \quad \Leftrightarrow \quad \pm X_p\in T_p^+ M  \quad \Leftrightarrow \quad  X_p\in T^+_pM \cap T^-_pM = T_p(\partial M). $$
	So in fact the conditions 
	\eqref{con:1}-\eqref{con:2} are reformulations of: for all $
	 p\in M, \pm X_p \in T_M(x).
	$ 
	However, in order to give a self contained presentation we  gave a direct proof. 
\end{rem}
\begin{rems}
	\begin{enumerate}
		\item The hypothesis that $M$ be (relatively) closed can not be dropped as can be seen by 
		considering open subsets $\Omega\subset \R^d$ as submanifolds equipped with the 
		identity chart. In this case the conclusion of the previous proposition does not hold. 
		\item Note that the conditions \eqref{con:1}-\eqref{con:2} state that the map 
		$p\mapsto X_p$ defines a vector field (smooth section of the tangent bundle) $ X:M\rightarrow TM $ such that its restriction 
		to $\partial M$ is also a vector field on $\partial M$, that is, 
		$ X:\partial M\rightarrow T(\partial M).  $
		Note that this is not the case in general.
		\item  The case $m=d$ corresponds to the case of the closure of an open set  $M$ in  $\R^d$ with $C^k$-boundary 
		$\partial M = \fr(M) $. The conditions 
		\eqref{con:1}-\eqref{con:2} reduce to 
		$ X_p\cdot \nu_p = 0$ for all $p\in \partial M, $
		where $\nu$ is the inward pointing normal vector along $\partial M$.         
		Indeed, for all $p\in \text{int}(M)$, we have $X_p\in T_pM = 
		\R^d$ which is always true and for all $p\in \partial M$, we have 
		$ X_p \in T_p(\partial M)$ if and only if $X_p \cdot \nu_p = 0$.  
		\item Let $M$ be simply connected.  In the  case  $m=1$ the 
			manifold $M$ can be described by the image of an embedded curve in $\R^d$. 
		Let $\gamma:[a,b] \rightarrow \R^d$ be such a curve and put $M := \gamma([a,b])$. Then $\partial M = \{\gamma(a) , \gamma(b) \}$ and the tangent space at 
		$\gamma(a)$ and $\gamma(b)$ is simply the zero space, that is, 
		$T_{\gamma(a)}(\partial M) = T_{\gamma(b)}(\partial M)  = \{0\}$.
		Since $T_{\gamma(a)}(\partial M)^\bot = T_{\gamma(a)}(\partial M) \oplus  T_{\gamma(a)}(\partial M)^\bot = T_{\gamma(a)}(\partial M)$ we obtain 
		$\nu(a ) = \gamma'(a) / |\gamma'(a)| $ and similarly $\nu(b) =-\gamma'(b)/ |\gamma'(b)|$. Compare also Section~\ref{sec:5} and \cite{MR1803575,Fremiotthesis} for the special case $m=1$ and $d=2$. 
	\end{enumerate}
\end{rems}

\section{The classical structure theorem revisited}\label{sec:3}
The following theorem is commonly known as structure theorem; cf.  \cite[Thm. 3.6, Cor. 1, pp. 479--481]{MR2731611}.  It gives the 
general structure of first order shape derivatives of shape functions 
defined on open or closed  subsets $\Omega$ of $\R^d$. We make use of the 
notation and material introduced in Subsection \ref{sec:quotient_space}.

\begin{thm}[General case]\label{thm:structure_theorem-1}
Let the hold-all $D\subset \R^d$ be open and bounded.  Let $\Omega$ be an open or closed set $\Omega \subset D$ with boundary $\Gamma:=\textnormal{fr}(\Omega)$.
Fix $1 \le k <\infty$.  Suppose that the Eulerian semi-derivative  $dJ(\Omega)(X)$ exists for all 
 $X\in C^k_c(D,\R^d)$.  
\begin{itemize}
\item[(i)] In general we have
	$
	dJ(\Omega)(X) = 0$ for all $ X\in 
	C^k_c(D,\R^d)$	with $ X=0$ on  $\Gamma. $
\item[(ii)] If $X\mapsto dJ(\Omega)(X)$ is linear, then  there is a linear  mapping 
$\tilde  g: \text{im}(\tilde{\mathfrak{J}}_\Gamma) \rightarrow \R$ such that
\ben\label{stru:very_general}
dJ(\Omega)(\VV)=\tilde g(\VV_{|\Gamma})
\een
for all $X\in C^k_c(D,\R^d)$, where $\text{im}(\tilde{\mathfrak{I}}_\Gamma):= \{ \tilde{\mathfrak{I}}_\Gamma(X)|\; X\in Q^k(\Gamma)\}$
denotes the image of $\tilde{\mathfrak{I}}_\Gamma$. 
\item[(iii)] If $ \Omega$ is of class $C^k$ and $dJ(\Omega)$ is of order $k \ge 1$, then  $\text{im}(\mathfrak I_\Gamma) = C^k(\Gamma,\R^d)$ and $\tilde g:C^k(\Gamma,\R^d) \rightarrow \R$ is
a continuous functional. 
\end{itemize}
\end{thm}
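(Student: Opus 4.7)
My plan is to establish the three parts in turn, doing the real work in (i) and then reading off (ii) and (iii) as largely algebraic and functional-analytic consequences.

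For (i), the strategy is to invoke Nagumo via Corollary~\ref{cor:nagumo}. If $X \in C^k_c(D,\R^d)$ vanishes on $\Gamma$, then the tangency condition $\pm X(x) \in T_K(x)$ holds trivially at $x \in \Gamma$ (the vector itself is zero) and automatically at points of the topological interior of $\overline{\Omega}$, where the contingent cone equals $\R^d$. I apply the corollary twice: to the closed set $K = \Gamma$, obtaining $\Phi_t(\Gamma) = \Gamma$, and to the closed set $K = \overline{\Omega}$ (which coincides with $\Omega$ in the closed case), obtaining $\Phi_t(\overline{\Omega}) = \overline{\Omega}$. Since $\Phi_t$ is a bijection of $\R^d$, these two invariances together force $\Phi_t(\Omega) = \Omega$ for all small $t$, whence $J(\Phi_t(\Omega)) = J(\Omega)$ and the difference quotient defining $dJ(\Omega)(X)$ is identically zero.

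For (ii), I factor $dJ(\Omega)$ through the quotient space $Q^k(\Gamma)$ in the spirit of Subsection~\ref{sec:quotient_space}. Given $Y \in \text{im}(\tilde{\mathfrak{J}}_\Gamma)$, choose any lift $X \in C^k_c(D,\R^d)$ with $\mathfrak{J}_\Gamma(X) = Y$ and set $\tilde g(Y) := dJ(\Omega)(X)$. The definition is independent of the lift: any two such $X_1, X_2$ differ by an element of $T^k(\Gamma)$, which by (i) and the assumed linearity is mapped to zero. Linearity of $\tilde g$ transfers from that of $dJ(\Omega)$, and \eqref{stru:very_general} is then tautological.

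For (iii), two properties must be checked: that $\text{im}(\mathfrak{J}_\Gamma) = C^k(\Gamma,\R^d)$, and that $\tilde g$ is continuous. Both follow once a continuous linear extension operator is available. Since $\Gamma$ is a compact $C^k$-hypersurface contained in the open bounded set $D$, a tubular-neighbourhood construction patched by a partition of unity subordinate to boundary charts yields a continuous linear extension $E: C^k(\Gamma,\R^d) \to C^k(\R^d,\R^d)$. Multiplying by a fixed cutoff $\chi \in C^\infty_c(D)$ that is identically $1$ on a neighbourhood of $\Gamma$ gives $E_0 := \chi E : C^k(\Gamma,\R^d) \to C^k_c(D,\R^d)$ with $\mathfrak{J}_\Gamma \circ E_0 = \text{id}$. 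Surjectivity is immediate, and $\tilde g = dJ(\Omega) \circ E_0$ is continuous as a composition of continuous linear maps. The principal obstacle is exactly this extension step: pointwise existence of a $C^k$-extension is classical, but linearity together with continuity in the $C^k$-norm and the support control enforced by the cutoff require a careful patching argument. Once $E_0$ is constructed, everything in (iii) follows at once.
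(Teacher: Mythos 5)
Your proposal is correct and follows essentially the same route as the paper: Nagumo's invariance for (i), factoring through the quotient $Q^k(\Gamma)$ for (ii), and a continuous linear extension operator $E$ with $\tilde g = dJ(\Omega)\circ E$ for (iii). The only (welcome) difference is in the open case of (i), where you apply Corollary~\ref{cor:nagumo} to both $\Gamma$ and $\overline\Omega$ and use $\Omega=\overline\Omega\setminus\Gamma$, rather than citing Corollary~\ref{cor:nagumo-1}, whose $C^k$-boundary hypothesis is not actually assumed in the theorem; your version needs no regularity because $X$ vanishes on $\Gamma$.
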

\begin{proof}
(i)  Let  $X\in C^k_c(D,\R^d)$ be such that $X=0$ on $\fr(\Omega)$.  
If $\Omega$ is closed, then $\pm X(x) \in T_\Omega(x)$ for all 
$x\in \Gamma$ by definition and obviously  $\pm X(x) \in T_\Omega(x) = \R^d 
$ for all $ x\in \text{int}(\Omega)$. 
So it  follows from Corollary \ref{cor:nagumo} that 
$\Phi_t(\Omega) = \Omega$ for all $t$.   On the other hand if 
$\Omega$ is open, then it follows from Corollary \ref{cor:nagumo-1} 
that $\Phi_t(\Omega) = \Omega$ for all times $t$. 
So in either cases  $dJ(\Omega)(X) = 0$ for all 
$X\in C^k_c(D,\R^d)$ with $X=0$ on $\Gamma$. 
	
(ii) Let $\Omega$ be an open or closed subset of $\R^d$ and fix an integer $k\ge 1$. The set
$T^k(\Gamma)$ is a closed subspace of the vector space 
$C^k_c(D,\R^d)$. Accordingly, the quotient 
$ Q^k(\Gamma) :=  C^k_c( D,\R^d)/T^k(\Gamma)$ is 
well-defined. By item $(i)$ and the linearity of $X\mapsto dJ(\Omega)(X)$ the induced mapping 
$\widetilde{dJ(\Omega)}: Q^k(\Gamma) \rightarrow \R$ is 
well-defined. 
We define the function $\tilde g:\text{im}(\tilde{\mathfrak{J}}_\Gamma) \rightarrow \R$ by the following comuting diagram. 
\begin{diagram}[loose,height=2em,width=80pt]
	C^k_c(D,\R^d) 			& 	 \rTo^{ \mathfrak J_\Gamma}	& 		 \text{im}(\tilde{\mathfrak{J}}_{\Gamma})   &       \\
	\dTo^{\pi}		 	&   	\ruTo^{\tilde{\mathfrak{J}}_\Gamma} \ldTo(2,4)^{\tilde g}			       	    &         &		 \\
	 Q^k(\Gamma)			 & 				    	  & 					    &             \\ 
	 \dTo^{\widetilde{dJ(\Omega)}}	&  					 & 					&\\
		\R    			 &  					&  			    		&
 \end{diagram}
By definition 
$ \tilde g \circ \tilde{\mathfrak{J}}_\Gamma = \widetilde{dJ(\Omega)}$.  
Now $\tilde{\mathfrak{J}}_\Gamma$
is injective and hence invertiable on $\text{im}( \tilde{\mathfrak{J}}_\Gamma )$. Therefore we 
obtain 
$ \tilde g = \widetilde{dJ(\Omega)} \circ \tilde{\mathfrak{J}}_\Gamma^{-1} $
and by definition $dJ(\Omega)(X) = \tilde g(X|_{\Gamma}) $. 
\\
(ii) Now suppose that $\Gamma = \fr(\Omega)$ is of class $C^k$, $k\ge 1$. 
Denote by $E:C^k(\Gamma,\R^d) \rightarrow C^k_c(D,\R^d)$ the extension 
operator.  Then it is readily seen that  
$\tilde{\mathfrak{J}}_\Gamma^{-1} =  \pi \circ E  $, so that 
$\tilde{\mathfrak{J}}_\Gamma:Q^k(\Gamma) \rightarrow C^k(\Gamma, \R^d) $ is 
 surjective.  Hence we get $\text{im}(\tilde{\mathfrak{J}}_\Gamma) = C^k(\Gamma, \R^d)$.  
 From this 
it follows that $\tilde g$ is a linear and continuous functional 
on $C^k(\Gamma,\R^d)$. 
\end{proof}
% % % % % % % % % % % % % % % % % % % % %

Nagumo's theorem allows us to show that the distribution given by 
\eqref{stru:very_general}  depends explicitly  on normal perturbations 
$X\cdot \nu$ if we require the boundary to be smoother.  

\begin{cor}[Smooth case]\label{thm:structure_theorem} 
 Let $\Omega$ be open in $\R^d$ with a compact  $C^{k+1}$-boundary 
 $\Gamma:= \fr(\Omega)$,  $1 \le k < \infty $. Suppose that $J$ is shape differentiable at 
 $\Omega$ and that 
$dJ(\Omega)$ is of order $k$. Then there exists a linear and continuous function $g: C^k(\Gamma)\rightarrow \R$, such that
\ben\label{volume}
dJ(\Omega)(X)=g(X_{|\Gamma}\cdot \nu) \quad \text{ for all } X\in C^k_c(D,\R^d).
\een
\end{cor}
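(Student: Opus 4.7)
The plan is to bootstrap from Theorem \ref{thm:structure_theorem-1}(iii): since $\Omega$ has $C^{k+1}$ boundary and hence certainly $C^k$ boundary, that theorem already supplies a linear continuous $\tilde g : C^k(\Gamma,\R^d) \to \R$ with
\[
dJ(\Omega)(X) = \tilde g(X_{|\Gamma}) \quad \text{for all } X \in C^k_c(D,\R^d).
\]
The whole game is thus to reduce $\tilde g$ from a functional on vector fields along $\Gamma$ to a functional on scalar functions on $\Gamma$, by showing that $\tilde g$ only sees the normal component $X_{|\Gamma}\cdot\nu$.

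The key reduction is the tangential vanishing. Let $V \in C^k(\Gamma,\R^d)$ with $V\cdot\nu = 0$ on $\Gamma$. Using the $C^k$-extension operator $E$ appearing in the proof of Theorem \ref{thm:structure_theorem-1}(iii), define $X := EV \in C^k_c(D,\R^d)$, so that $X_{|\Gamma} = V$ and in particular $X\cdot\nu = 0$ on $\Gamma = \fr(\Omega)$. Corollary \ref{cor:nagumo-1} then gives $\Phi_t(\Omega) = \Omega$ for all small $t$, whence $J(\Phi_t(\Omega)) = J(\Omega)$ and consequently
\[
\tilde g(V) = dJ(\Omega)(X) = 0.
\]
So $\tilde g$ annihilates the closed subspace of tangential $C^k$ vector fields on $\Gamma$.

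Now I would exploit the $C^{k+1}$ regularity of $\Gamma$ to split vector fields. Since $\Gamma$ is $C^{k+1}$, the outward unit normal $\nu$ is $C^k$ on $\Gamma$, so for any $X \in C^k_c(D,\R^d)$ one gets a valid decomposition
\[
X_{|\Gamma} = X_\tau + (X_{|\Gamma}\cdot\nu)\,\nu, \qquad X_\tau := X_{|\Gamma} - (X_{|\Gamma}\cdot\nu)\,\nu \in C^k(\Gamma,\R^d),
\]
with $X_\tau \cdot \nu = 0$. Define $g : C^k(\Gamma) \to \R$ by $g(\varphi) := \tilde g(\varphi\,\nu)$. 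Then $g$ is linear, and continuous because multiplication by the fixed $C^k$ field $\nu$ is a continuous linear map $C^k(\Gamma) \to C^k(\Gamma,\R^d)$ and $\tilde g$ is continuous on the latter. By the tangential vanishing and linearity of $\tilde g$,
\[
dJ(\Omega)(X) = \tilde g(X_{|\Gamma}) = \tilde g(X_\tau) + \tilde g\bigl((X_{|\Gamma}\cdot\nu)\nu\bigr) = 0 + g(X_{|\Gamma}\cdot\nu),
\]
which is the claim.

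The only real obstacle is making sure the regularity bookkeeping in the splitting step goes through: this is precisely why the hypothesis is $C^{k+1}$ rather than $C^k$ — we need $\nu \in C^k(\Gamma,\R^d)$ so that the tangential part $X_\tau$ lies in $C^k(\Gamma,\R^d)$ (where $\tilde g$ is defined) and so that the multiplication operator $\varphi \mapsto \varphi\nu$ is continuous $C^k(\Gamma)\to C^k(\Gamma,\R^d)$, giving continuity of $g$. The tangential vanishing step is then a direct application of Corollary \ref{cor:nagumo-1}, which already did the analytic work.
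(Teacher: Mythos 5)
Your proof is correct and follows essentially the same route as the paper: invoke Theorem \ref{thm:structure_theorem-1} to get $\tilde g$ on $C^k(\Gamma,\R^d)$, use Corollary \ref{cor:nagumo-1} to kill the tangential part (the $C^{k+1}$ hypothesis ensuring $\nu\in C^k(\Gamma,\R^d)$ so the splitting stays in $C^k$), and set $g(\varphi):=\tilde g(\varphi\nu)$. The only cosmetic difference is that you extend the tangential field $V$ via $E$ before applying Nagumo, whereas the paper extends $\nu$ and subtracts the normal component from $X$ directly; both are valid.
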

\begin{proof}
As $\Gamma:=\fr(\Omega)$ is of class $C^k$, $k\ge 1$, we know by 
Theorem~\ref{thm:structure_theorem-1} that 
there is a linear and continuous functional 
$\tilde  g:C^k(\Gamma, \R^d) \rightarrow \R$, such that  
\ben\label{eq:derivative_boundary}
 dJ(\Omega)(X) = \tilde g(X|_{\Gamma}) \quad \text{ for all } X\in C^k_c(D,\R^d).
 \een
We split $X$ into normal and tangential part along $\Gamma$, that is, $X_{|\Gamma}=X_{\mathfrak t} + (X_{|\Gamma}\cdot \nu)\nu$, where $\nu$ is the 
normal vector along $\Gamma$ and $X_{\mathfrak t}:= X_{|\Gamma} -  (X_{|\Gamma}\cdot \nu)\nu$. As the 
boundary $\Gamma$ is of class $C^{k+1}$ the normal $\nu$ is of class 
$C^k(\Gamma,\R^d)$. Then 
it follows from Corollary \ref{cor:nagumo-1}
 that $dJ(\Omega)(X)=0$ for all 
$X$ in $C^k_c(D,\R^d)$ with $X\cdot \nu =0$ on $\Gamma$.   
Therefore 
extending $\nu$ to a function $\tilde \nu \in C_c^k(D, \R^d)$ and defining 
$\tilde X_{\mathfrak t} := X+ (\tilde \nu\cdot X) \tilde \nu$ shows that 
$0 = dJ(\Omega)(\tilde X_{\mathfrak t}) = g(X_{\mathfrak t}). $
So inserting $X$ into \eqref{stru:very_general}, we find 
\ben
dJ(\Omega)(X) = \tilde g(X_{\mathfrak t}) + \tilde g((X_{|\Gamma}\cdot \nu)\nu) = \tilde g((X_{|\Gamma}\cdot \nu)\nu).
\een
The mapping $g(v):=  \tilde g(v\nu)$ is continuous on  $C^k(\Gamma)$. 
\end{proof}

\begin{rems}
	\begin{enumerate}
\item[(a)] Corollary~\ref{thm:structure_theorem} is usually referred to as  structure theorem.
\item[(b)]
If  $g$ in  Theorem \ref{thm:structure_theorem} belongs to $L_1(\Gamma)$, then we have the typical boundary expression
\ben\label{eq:bound_exp}
dJ(\Omega)(\VV)=\int_\Gamma g\, \VV\cdot \nu\, ds.
\een
This expression of the derivative  is usually referred to as  {\it Hadamard} or {\it Hadamard-Zol\'esio} formula. 
\item[(c)] It is important to note that 
	if one wants a formula like \eqref{volume} for the shape derivative, 
	then  the smoothness of  the boundary $\fr(\Omega)$ has to be one 
	order higher than the order $k$ of $dJ(\Omega)$. 
The reason is that in 
order to have the unit normal vector field  in $C^k$, we need the boundary $\fr(\Omega)$ to be of 
class $C^{k+1}$. However, to obtain that the derivative actually ``lives'' on 
the boundary it  is no regularity on the boundary nessacary.  
In less regular situations, that is, when $\Omega$ has less regularity,  it is still possible to obtain a formula 
in the spirit of \eqref{volume}. However, this requires notions from 
 geometric measure 
theory; cf.  \cite{LamPierr06}.
\end{enumerate}
\end{rems}

%%%%%%%%%%%%%%%%%%%%%%%%%%%%%%%%%%%%%%%%%%%%%%%%%%%%%%%

\section{Structure theorem for $C^k$-submanifold} \label{sec:4}
 In this section, we study the 
structure of the shape derivative of  real-valued shape 
functions $$ J:\Xi \rightarrow \R, \qquad M \mapsto J(M),  $$
where $\Xi\subset \mathcal A^k_m$, $1\le k,m < \infty$, is some admissible set and 
$$\mathcal A^k_m = \{M \subset \R^d| M \text{ is closed and bounded } m\text{-dimensional }  C^k\text{-submanifold of } \R^d\}. $$

\subsection{Splitting of vector fields}
Let $M$ be a $m$-dimensional  closed and bounded $C^k$-submanifold of $\R^d$. We use the notation 
$ \mathfrak X^k(M) :=\{X:M\rightarrow TM|\, X \text{ is of class } C^k \}$
for the space of $C^k$-vector fields on $M$. Similarly, 
$\mathfrak X^k_\bot (M)$ denotes the normal fields along $M$.
 We introduce the orthogonal projection 
$\mathfrak p_{T_pM}:T_p\R^d \rightarrow T_pM$ by 
$$ (\mathfrak p_{T_pM} (X)-X,V) =0 \quad \text{ for all } V\in T_pM. $$   
Then defining $\mathfrak p_{T_pM}^\bot(x) := x - \mathfrak p_{T_pM}(x)$ we have
$\ker{\mathfrak p_{T_pM}} = \im{\mathfrak p_{T_pM}^\bot} = (T_pM)^\bot$. 
Note that the projection depends on $p\in M$ as the tangent space 
varies when $p$ changes.

Now given a function  $X \in C^k(M,\R^d)$, we define its 
{\bf orthogonal projection} onto the vector  bundle $TM^\bot := \cup_{p\in M}(T_pM)^\bot$ pointwise by 
$$ X \mapsto \mathfrak p_{T_pM}^\bot(X)|_p =: X^\bot_p.   $$
This defines a mapping 
$$  \mathfrak p_{T_pM}^\bot: C^k(M,\R^d) \rightarrow \mathfrak X^k_\bot (M).$$
As $\R^d = T_p\R^d = T_pM \oplus (T_pM)^\bot $ for all $p$ in $M$, we may 
write $X = \tilde X + \tilde X^\bot$, where $\tilde X\in \mathfrak X^k(M)$ and 
$\tilde X^\bot\in \mathfrak X^k_\bot(M)$ and  then by 
definition 
$\mathfrak p_{T_pM}^\bot(X) = \tilde X^\bot $.  
\begin{defin}
	A subset $S\subset M$ of a $m$-dimensional submanifold is called embedded submanifold if 
	the inclusion map
	$ i: S\rightarrow M, \; x \mapsto x  $
	is an embedding. We call $S$ closed, embedded submanifold if $i:S\rightarrow M$ is proper, that is, $i^{-1}(A)$ is compact for all $A\subset M$ compact. 
\end{defin}
The following two lemmas will be crucial for our investigation.
The first one can be established using local charts; cf.  \cite{Lee}.
\begin{lem}\label{lem:extension_vector_field}
Let $M$ be a $m$-dimensional $C^k$-submanifold $M$ and let 
$S\subset M$ be a $s$-dimensional closed, embedded submanifold of $M$.  
Then every vector field $X\in \mathfrak X^k(S)$ can be extended to $M$, that is, there is a vector field $\tilde X\in \mathfrak X^k(M)$ satisfying
 $\tilde X|_S = X$.
\end{lem}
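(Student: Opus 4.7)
I would prove this by the standard technique of constructing local extensions in slice charts around points of $S$ and then gluing them together with a $C^k$ partition of unity, exploiting the fact that $S$ being closed in $M$ makes $M\setminus S$ open and thus available as an additional patch of the cover where one takes the trivial extension.

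Since $S$ is an $s$-dimensional embedded $C^k$-submanifold of the $m$-dimensional $C^k$-submanifold $M\subset\R^d$, around every $p\in S$ one can pick a chart $(U_p,\varphi_p)$ of $\R^d$ that is \emph{simultaneously} a slice chart for $M$ and for $S$, that is,
\[
\varphi_p(U_p\cap M)=\varphi_p(U_p)\cap(\R^m\times\{\mathbf 0\}),\qquad
\varphi_p(U_p\cap S)=\varphi_p(U_p)\cap(\R^s\times\{\mathbf 0\}).
\]
Pushing the restriction $X|_{U_p\cap S}$ forward through $d\varphi_p$ gives a $C^k$ map $\hat X_p$ on $\varphi_p(U_p\cap S)$ taking values in $\R^s\times\{\mathbf 0\}^{d-s}$, because $X$ is tangent to $S$. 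I would extend $\hat X_p$ to $\varphi_p(U_p\cap M)\subset \R^m\times\{\mathbf 0\}^{d-m}$ by declaring it constant in the transverse slice coordinates $y_{s+1},\dots,y_m$ and then pull it back by $d\varphi_p^{-1}$ to obtain a vector field $\tilde X_p\in\mathfrak X^k(U_p\cap M)$ that agrees with $X$ on $U_p\cap S$; it is automatically tangent to $M$ because its chart representation lies in $\R^m\times\{\mathbf 0\}^{d-m}$.

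To globalize, I would form the open cover $\{U_p\cap M\}_{p\in S}\cup\{M\setminus S\}$ of $M$, pick a locally finite $C^k$ partition of unity $\{\rho_\alpha\}$ subordinate to it (available since $M$ itself is $C^k$), set the local field to be $0$ on the patch coming from $M\setminus S$, and define $\tilde X:=\sum_\alpha\rho_\alpha\tilde X_\alpha$. For each $q\in M$ this is a convex combination of vectors in $T_qM$, so $\tilde X\in\mathfrak X^k(M)$; and on $S$ every nonzero summand equals $X$, so $\tilde X|_S=\sum_\alpha\rho_\alpha X=X$. The key technical point, and the only place a genuine choice is made, is the existence of a chart adapted to both $M$ and $S$ simultaneously: this is where the hypothesis that $S$ be embedded (and not just immersed) into $M$ enters, and where the hypothesis that $S$ be closed in $M$ is needed so that $M\setminus S$ is open and may legitimately appear in the cover. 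Once the simultaneous slice chart is in hand, the transverse-constant extension is trivially $C^k$ and the remainder of the argument is routine partition-of-unity bookkeeping.
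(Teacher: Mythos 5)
Your argument is correct and is essentially the same as the paper's, which does not write out a proof at all but simply remarks that the lemma ``can be established using local charts'' and cites Lee, where exactly this slice-chart, transverse-constant-extension, partition-of-unity proof appears. The only points you leave implicit --- shrinking each slice chart to a product neighbourhood so that the transverse-constant extension is well defined, and the (here harmless, but worth noting in the $C^k$ rather than $C^\infty$ category) fact that pushing a vector field forward by a $C^k$ chart formally costs a derivative --- are glossed over by the paper as well.
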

\begin{rem}
\begin{itemize}
\item If $M$ is a $m$-dimensional submanifold with boundary $\partial M$, then $\partial M$ is 
a closed, embedded submanifold of $M$. Hence every vector field defined on the boundary can be extended to all of $M$. 
\item If $M=\R^d$ and $S\subset \R^d$ is a closed, embedded $C^k$-submanifold, then every vector field defined on 
$S$ can be extended to a $C^k$-vector field on $\R^d$. In particular, if $S$ is compact then 
the support of the vector field can be chosen to lie in some open set $D$ 
containing $S$. 
\end{itemize}
\end{rem}

\begin{lem}\label{lem:splitting_vector_field}
Let $M$ be a closed and bounded $m$-dimensional  $C^{k+1}$-submanifold of 
$\R^d$ contained in an open set $D\subset \R^d$.  Let us denote 
by $\nu$ the unique outward-pointing  unit vector 
field on $\partial M$. 
Then to each vector field  $X\in C^k_c(D,\R^d)$, 
we find vector fields $X^\bot, X^{\mathfrak t}, X^{\nu}\in C^k_c(D,\R^d)$ satisfying
$$ X = X^\bot + X^{\mathfrak t} + X^{\nu} \quad \text{ in } M, $$
 and
\begin{align}
X^{\mathfrak t}_p  \in T_pM \quad &\text{ for all } p\in M, \label{eq:tangent-1}\\
X^{\mathfrak t}_p \in T_p(\partial M) \quad &  \text{ for all } p\in \partial M,\\
X^{\nu}_p = (X\cdot \nu)\nu  \quad &  \text{ for all } p\in\partial M,\\
X^\bot_p  \in (T_pM)^\bot \quad & \text{ for all } p\in M.\label{eq:tangent-4}
\end{align}  
\end{lem}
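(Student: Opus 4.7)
My plan is to build the three pieces sequentially: first produce the boundary-normal correction $X^\nu$, then project the remainder to extract the tangential part $X^{\mathfrak t}$, and finally read off $X^\bot$ as what is left over. The regularity bookkeeping (i.e.\ $C^k$ versus $C^{k+1}$) is what forces the hypothesis that $M$ be of class $C^{k+1}$.

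First I would extend $\nu$ off of $\partial M$. Since $M$ is $C^{k+1}$, the outward unit normal field $\nu$ is of class $C^k$ on the closed, embedded $(m-1)$-dimensional $C^{k+1}$-submanifold $\partial M\subset\R^d$. Applying Lemma~\ref{lem:extension_vector_field} (in the $M=\R^d$ form spelled out in the remark that follows it) and multiplying by a cutoff in $C^\infty_c(D)$ equal to $1$ on a neighbourhood of $M$, I obtain $\tilde\nu\in C^k_c(D,\R^d)$ with $\tilde\nu = \nu$ on $\partial M$. Set
\[
X^\nu := (X\cdot\tilde\nu)\,\tilde\nu.
\]
Then $X^\nu\in C^k_c(D,\R^d)$, and on $\partial M$ we have $\tilde\nu=\nu$ and $|\nu|=1$, so $X^\nu_p = (X_p\cdot\nu_p)\nu_p$, giving the prescribed boundary identity.

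Next I would extract the tangential piece. Put $Y := X - X^\nu \in C^k_c(D,\R^d)$. Using that $M$ is $C^{k+1}$, the orthogonal projection $p\mapsto \mathfrak p_{T_pM}$ depends in a $C^k$ way on $p\in M$, so
\[
Z(p) := \mathfrak p_{T_pM}\bigl(Y(p)\bigr), \qquad p\in M,
\]
defines an element of $\mathfrak X^k(M)$. On $\partial M$ the cancellation $Y\cdot\nu = X\cdot\nu - (X\cdot\nu)(\nu\cdot\nu) = 0$ built into Step~1, combined with the orthogonal decomposition $T_pM = T_p(\partial M)\oplus\mathrm{span}(\nu_p)$, forces $Z(p)\in T_p(\partial M)$. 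Hence $Z$ is a $C^k$-vector field on $M$ whose restriction to $\partial M$ is a vector field on $\partial M$; Lemma~\ref{lem:extension_vector_field} (in its extension-to-$\R^d$ variant) together with a cutoff supported in $D$ yields $X^{\mathfrak t}\in C^k_c(D,\R^d)$ with $X^{\mathfrak t}|_M = Z$, which satisfies the two tangential requirements. Finally I set $X^\bot := X - X^\nu - X^{\mathfrak t} \in C^k_c(D,\R^d)$. The decomposition $X = X^\bot + X^{\mathfrak t} + X^\nu$ on $M$ is automatic, and on $M$ we compute $X^\bot|_M = Y - \mathfrak p_{T_pM}(Y) = \mathfrak p_{T_pM}^\bot(Y)$, which is pointwise in $(T_pM)^\bot$, giving \eqref{eq:tangent-4}.

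The step I expect to be the main obstacle is justifying that the tangential projection $Z$ genuinely lies in $\mathfrak X^k(M)$ and simultaneously restricts to a tangent field on $\partial M$: the former forces one to use the full $C^{k+1}$-regularity of $M$ (since only then is $p\mapsto \mathfrak p_{T_pM}$ of class $C^k$), while the latter is exactly what the preparatory subtraction of $X^\nu$ is designed to arrange. Once $Z$ has been identified as an honest $C^k$-tangent field on the pair $(M,\partial M)$, the remaining extensions from $M$ to $C^k_c(D,\R^d)$ are routine applications of Lemma~\ref{lem:extension_vector_field} and cutoff functions.
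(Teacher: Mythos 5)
Your construction is correct and uses the same ingredients as the paper (pointwise orthogonal projection onto $T_pM$, the extension Lemma~\ref{lem:extension_vector_field}, and the subtraction of $(X\cdot\nu)\nu$ at the boundary), but assembles them in the opposite order: the paper first splits off $\hat X^\bot=\mathfrak p^\bot_{T_pM}(X|_M)$, then removes $(X\cdot\nu)\nu$ from the tangential remainder, whereas you first subtract the boundary-normal correction and only then project. Both orders verify \eqref{eq:tangent-1}--\eqref{eq:tangent-4}, and your check that $Z(p)\cdot\nu_p=Y_p\cdot\nu_p=0$ on $\partial M$ (using self-adjointness of the projection and $\nu_p\in T_pM$) is exactly the cancellation the paper also relies on. The one substantive difference is the resulting normal component: because your $\tilde\nu$ is an ambient extension, $X^\nu=(X\cdot\tilde\nu)\tilde\nu$ need not be tangent to $M$ on $\mathrm{int}(M)$, so your $X^\bot|_M=\mathfrak p^\bot_{T_pM}\bigl((X-X^\nu)|_M\bigr)$, which generally differs from $\mathfrak p^\bot_{T_pM}(X|_M)$ away from $\partial M$. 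This still satisfies the lemma as stated, but Theorem~\ref{thm:structure} invokes the lemma with the specific normalization $X^\bot:=\mathfrak p^\bot_{T_pM}(X|_M)$; the paper secures this by extending $(X\cdot\nu)\nu$ from $\partial M$ to a field in $\mathfrak X^k(M)$, i.e.\ tangent to $M$, so that its $\mathfrak p^\bot$-component vanishes. If you replace your ambient $X^\nu$ by such a tangent extension (which Lemma~\ref{lem:extension_vector_field} provides), your argument reproduces the paper's decomposition exactly; your regularity remarks about $p\mapsto\mathfrak p_{T_pM}$ being $C^k$ for a $C^{k+1}$-manifold are at the same level of justification as the paper's own ``by using local charts''.
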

\begin{proof}
At first, we define 
\ben\label{eq:X_t}
\hat  X^{\mathfrak e}_p := X_p - \hat  X^\bot_p 
 \een
for all $p\in M$, where $\hat X^\bot = \mathfrak p_{T_pM}^\bot (X|_{M})$.  
Since by definition  we have the decomposition 
$ \R^d = (T_pM)^\bot \oplus T_pM  $ for all points $p\in M$, we obtain $\hat  X^{\mathfrak e}_p\in T_pM$ for all $p$ in $M$. 
By definition we have 
$$ (\hat X^{\mathfrak e}_p - X_p, v_p) = 0 \quad \text{ for all }v_p\in T_pM, \quad  p\in M,$$
and this shows that $\hat X^{\mathfrak e} \in C^k(M, \R^d)$ by using local 
charts.  It follows that $\hat X^{\bot}\in C^k(M,\R^d)$.
According to Lemma~\ref{lem:extension_vector_field}, we may extend $\hat X^{\mathfrak e}$ and $\hat X^\bot$ to functions 
$ X^{\mathfrak e}, X^\bot \in C^k_c(D,\R^d)$. 
On the other hand we have for all boundary points $p\in \partial M$
\ben\label{eq:decomposition}
 \R^d = (T_pM)^\bot \oplus T_pM = (T_pM)^\bot \oplus T_p(\partial M) \oplus^{T_pM} (T_p(\partial M))^\bot. 
 \een
Denote by $\nu\in \mathfrak X^k(\partial M)$ the outward-pointing unit normal 
field along $\partial M$. 
 As the injection $i:\partial M  \rightarrow M$ is proper we 
may apply Lemma~\ref{lem:extension_vector_field} and extend the vector field 
$\hat X^\nu := (X \cdot \nu)\nu $ on $\partial M$, 
to a vector field $X^\nu \in\mathfrak{X}^k( M)$, which itself can be extended 
to a vector field in $C^k_c(D, \R^d)$, (still keeping the same notation).
Finlly, we  put 
$$ X^{\mathfrak t} :=  X^{\mathfrak e} -  X^{\nu} \stackrel{\eqref{eq:X_t}}{=} X-  X^\bot -  X^{\nu}. $$
In view of the fact that $T_pM$ is a linear space, we obtain 
$ X^{\mathfrak t}|_M = (\hat X^{\mathfrak e} - \hat X^{\nu})|_{M}\in T_pM$ for 
all $p\in M$ and because of \eqref{eq:decomposition} it follows 
$X^{\mathfrak t}_p \in T_p(\partial M)$ for all $p \in M$.  Hence we obtain 
the required decomposition
$ X = X^{\mathfrak t} +  X^\bot + X^{\nu}$ on $M$. 
\end{proof}

\subsection{The structure theorem for submanifolds}
With the preparations of the previous section we are now able to state our 
main result. 
\begin{thm}[Structure theorem for submanifolds]\label{thm:structure} 
Let $M$ be a bounded and closed $m$-dimensional $C^{k+1}$-submanifold of $\R^d$ 
contained in a bounded and open set $D\subset \R^d$. Suppose that $J$ is 
shape differentiable at $M$ and assume that $dJ(M)$ is 
of order $k$. Then there exist continuous functionals 
$h: C^k(M, \R^d) \rightarrow \R$ and  $g: C^k(\partial M) \rightarrow \R$
such that 
\ben\label{eq:struc_manifold}
dJ(M)(X) = h(X^\bot) + g(X_{|\partial M}\cdot \nu) \quad \text{ for all } X\in C^k_c(D,\R^d), 
 \een
where $X^\bot := \mathfrak p_{T_pM}^\bot(X|_{M})$ and 
$\nu$ is the unique outward-pointing unit vector field along $\partial M$. 
\end{thm}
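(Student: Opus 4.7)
The plan is to mimic the proof of Theorem \ref{thm:structure_theorem-1}, but with Proposition \ref{prop:nagumo} (Nagumo for submanifolds) replacing the open-set version, and with Lemma \ref{lem:splitting_vector_field} used to isolate the three pieces of an admissible test field.

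First I would show that $dJ(M)$ factors through restriction to $M$. If $Y\in C^k_c(D,\R^d)$ vanishes on $M$, every $p\in M$ is a stationary point of $\dot x = Y(x)$, so the flow $\Phi_t^Y$ fixes $M$ pointwise; hence $J(\Phi_t^Y(M))=J(M)$ and $dJ(M)(Y)=0$. By linearity of $dJ(M)$ and the quotient construction of Subsection~\ref{sec:quotient_space}, this produces a linear functional $\tilde g$ on $\text{im}(\tilde{\mathfrak J}_M)$ with $dJ(M)(X)=\tilde g(X|_M)$. Because $M$ is a closed, embedded $C^{k+1}$-submanifold of $\R^d$, every $V\in C^k(M,\R^d)$ admits an extension in $C^k_c(D,\R^d)$ (Lemma~\ref{lem:extension_vector_field} followed by a compactly supported cut-off), so $\text{im}(\tilde{\mathfrak J}_M)=C^k(M,\R^d)$ and $\tilde g$ is continuous on it, exactly as in part (iii) of Theorem~\ref{thm:structure_theorem-1}.

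Next I would invoke Lemma~\ref{lem:splitting_vector_field} to write $X = X^\bot + X^{\mathfrak t} + X^\nu$ on $M$. The field $X^{\mathfrak t}$ satisfies $X^{\mathfrak t}_p\in T_pM$ for $p\in M$ and $X^{\mathfrak t}_p\in T_p(\partial M)$ for $p\in\partial M$, so Proposition~\ref{prop:nagumo} gives $\Phi_t^{X^{\mathfrak t}}(M)=M$ and therefore $\tilde g(X^{\mathfrak t}|_M)=dJ(M)(X^{\mathfrak t})=0$. Consequently
\begin{equation*}
dJ(M)(X) = \tilde g(X^\bot|_M) + \tilde g(X^\nu|_M).
\end{equation*}
Setting $h:=\tilde g$ on $C^k(M,\R^d)$ reads the first summand as $h(X^\bot)$. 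For the boundary term I would show that $\tilde g(X^\nu|_M)$ depends only on the trace $\phi:=(X\cdot\nu)|_{\partial M}\in C^k(\partial M)$: if $Y_1,Y_2\in C^k(M,\R^d)$ are both tangent to $M$ and agree on $\partial M$, then $Y_1-Y_2$ is a tangential $C^k$ field on $M$ that vanishes on $\partial M$ and therefore satisfies the hypotheses of Proposition~\ref{prop:nagumo}; applied to any $C^k_c(D,\R^d)$-extension of $Y_1-Y_2$, this gives $\tilde g(Y_1-Y_2)=0$. Hence $g(\phi):=\tilde g(Y^\phi)$ is well-defined, where $Y^\phi$ is any tangential $C^k$ extension of $\phi\nu$ to $M$ (the existence of such extensions is precisely the construction carried out in the proof of Lemma~\ref{lem:splitting_vector_field}). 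Continuity of $g$ on $C^k(\partial M)$ follows from continuity of $\tilde g$, continuity of multiplication by $\nu\in C^k(\partial M,\R^d)$ (available because $\partial M$ is $C^{k+1}$), and continuity of an appropriate linear extension operator.

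The main obstacle I anticipate is this last well-definedness step. Lemma~\ref{lem:splitting_vector_field} determines $X^\nu$ only up to a tangential $C^k$ field on $M$ vanishing on $\partial M$, and it is exactly the full boundary-tangency condition $X_p\in T_p(\partial M)$ in Proposition~\ref{prop:nagumo} that absorbs this ambiguity. Once $h$ and $g$ are available, assembling the three pieces gives the formula \eqref{eq:struc_manifold}, and their continuity is a direct consequence of the continuity of $\tilde g$ together with the extension operators furnished by Lemma~\ref{lem:extension_vector_field}.
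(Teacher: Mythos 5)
Your proposal is correct and follows essentially the same route as the paper: factor $dJ(M)$ through restriction to $M$ via the quotient construction, split $X$ with Lemma~\ref{lem:splitting_vector_field}, annihilate the tangential part with Proposition~\ref{prop:nagumo}, and define $g$ on $C^k(\partial M)$ by checking that the value on tangential extensions of $(X\cdot\nu)\nu$ does not depend on the choice of extension. The well-definedness issue you flag is exactly what the paper handles by passing to the quotient $\mathfrak X^k(M)/\tilde T^k(M)$ and inverting the induced trace map $\tilde{\mathcal I}_\nu$, so your direct argument is the same step in non-diagrammatic form.
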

\begin{proof}
Recall definition \eqref{def:vector_zero}
namely $ T^k(M) = \{X\in C^k_c(D,\R^d)|\, X|_{M} = 0 \} .$
The set  $T^k(M)$ is a closed linear subspace of $C^k_c(D,\R^d)$.  
Recall the definition of the quotient 
$\mathcal Q^k(M) =  C^k_c(D,\R^d)/T^k(M) $
and consider similarly to Theorem~\ref{thm:structure_theorem-1} we have a 
commuting 
diagram: 
\begin{diagram}[loose,height=2em,width=80pt]
	C^k_c(D,\R^d) 			& 	 \rTo^{ \mathfrak J_M}	& 		 C^k(M,\R^d)   &       \\
	\dTo^{\pi}		 	&   	\ruTo^{\tilde{\mathfrak{J}}_M} \ldTo(2,4)^{\tilde h}			       	    &         &		 \\
	 Q^k(M)			 & 				    	  & 					    &             \\ 
	 \dTo^{\widetilde{dJ(M)}}	&  					 & 					&\\
		\R    			 &  					&  			    		&
 \end{diagram}
 By definition we have 
 $\widetilde{dJ(M)} = \tilde h \circ \mathfrak{\tilde{J}}_M $. 
 We see that  $ \mathfrak{\tilde{J}}_M^{-1} = \pi \circ E$, where
 $E:C^k(M,\R^d) \rightarrow C^k_c(D,\R^d)$ denotes the  continuous extension 
 operator.  It follows that 
 $\tilde h = \widetilde{dJ(M)} \circ  \mathfrak{\tilde{J}}_M^{-1}:C^k(M,\R^d) \rightarrow \R$
 is continuous. 
 By construction 
 $
\tilde h(X|_{M}) = \tilde h( \mathfrak{\tilde{J}}_M(\pi(X)) ) =   
\widetilde{dJ(M)} \circ\pi(X)  =  dJ(M)(X).
$
 Now we apply Lemma~\ref{lem:splitting_vector_field} to $X$ and find $X^\bot, X^\nu, X^{\mathfrak t}\in C^k_c(D,\R^d)$ satisfying \eqref{eq:tangent-1}-\eqref{eq:tangent-4} and 
$ X= X^\bot +  X^\nu + X^{\mathfrak t} $ on $M$. 
As $X^{\mathfrak t}_p\in T_pM$ for all $p\in M$ and $X^{\mathfrak t}_p\in T_p(\partial M)$ for all $p\in \partial M$,
we get from Proposition~\ref{prop:nagumo} that $0 = h(X^{\mathfrak t}) =  dJ(M)(X^{\mathfrak t})$, which implies
\ben\label{eq:first_step}
 dJ(M)(X) =   h(X^{\nu}|_{M}) + h(X^\bot) \quad \text{ for all }X\in C^k_c(D,\R^d).
 \een
Consequently 
$ dJ(M)(X) =  h(X^{\nu}|_{M})$ for all  $X\in C^k_c(D, \R^d)$  with   $X|_{M} \in \mathfrak X^k(M). $
To further process the right hand side of \eqref{eq:first_step} we introduce the  linear and continuous mapping 
$$
\mathcal{I}_{\nu}:\mathfrak X^k(M)\rightarrow  C^k(\partial M), \quad [X] \mapsto   X|_{\partial M}\cdot \nu  $$
and the linear space 
$ \tilde T^k(M) := \{X\in \mathfrak X^k(M): X|_{\partial M}\cdot \nu = 0 \text{ on } \partial M \}.   $
This space is a linear subspace of $\mathfrak X^k(M)$.  We define $g:\mathfrak X^k(M) \rightarrow \R$ by letting the following diagram 
commute.  

\begin{diagram}[loose,height=2em,width=80pt]
	\mathfrak X^k(M)		& 	 \rTo^{\mathcal I_{\nu}}  	& 		 C^k(\partial M)   &       \\
	\dTo^{\pi}		 	&   	\ruTo^{ \tilde{\mathcal{I}}_{\nu}   } \ldTo(2,4)^{ g}			       	    &         &		 \\
	 \mathfrak X^k(M)/\tilde T^k(M)   & 				    	  & 					    &             \\ 
	 \dTo^{\tilde h}	&  					 & 					&\\
		\R    			 &  					&  			    		&
 \end{diagram}
 By definition $\tilde h = g\circ \tilde{\mathcal{I}}_\nu$
on $ \mathfrak X^k(M)/\tilde T^k(M)$. 
As before we extend $\nu:\partial M 
\rightarrow TM$  to a vector field 
$\tilde \nu :M \rightarrow TM$. Moreover denote by 
$\tilde E:C^k(\partial M) \rightarrow C^k(M)$ the usual extension
operator.  Then we  see that 
$ \tilde{\mathcal{I}}_\nu^{-1}(f) = \pi \circ  ( \tilde \nu \cdot \tilde E(f))   $
. 
Therefore $g:C^k(\partial M) \rightarrow \R$ is continuous and
for every $X\in \mathfrak X^k(M)$, we obtain
\begin{align*}
 g( X|_{\partial M}\cdot \nu) & =  g( \tilde{\mathcal{I}}_\nu (\pi (X|_{M} ) )) \\
 & =   \tilde h( \pi(X|_{M}  )  )  \\
 & = h(X|_{M} )
\end{align*}
and thus $g( X|_{\partial M}\cdot \nu) = h(X^\nu|_{M} ) = dJ(M)(X^\nu|_{M}) $. 
Plugging this into \eqref{eq:first_step} we recover 
\eqref{eq:struc_manifold}. The continuity of $g$ follows from the continuity 
of the extension operator. 
\end{proof}

We conclude this section with the following two special cases of our main result.
\begin{cor}\label{cor:structure1} 
Let $M$ be a closed and bounded  $m$-dimensional $C^{k+1}$-submanifold of $\R^d$ 
without boundary, that is,  $\partial M = \emptyset$. Suppose that $J$ is 
shape differentiable at $M$ and assume that $dJ(M)$ is 
of order $k$. Then there exists a continuous functional 
$h: C^k(M,\R^d) \rightarrow \R$, such that 
$$ dJ(M)(X) = h(X^\bot) \quad \text{ for all } X\in C^k_c(D,\R^d). $$
\end{cor}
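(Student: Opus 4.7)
The plan is to derive this as an immediate specialization of Theorem~\ref{thm:structure}: when $\partial M = \emptyset$, the boundary term simply vanishes and only the normal-perturbation functional survives. First I would observe that with $\partial M = \emptyset$ there is no outward-pointing normal field to speak of and the restriction operator $X \mapsto X|_{\partial M}\cdot \nu$ is vacuous. Hence the expression $g(X|_{\partial M}\cdot \nu)$ appearing in \eqref{eq:struc_manifold} contributes nothing, and Theorem~\ref{thm:structure} directly yields $dJ(M)(X) = h(X^\bot)$ with the required continuous functional $h: C^k(M,\R^d) \to \R$.

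For a self-contained argument one can redo the proof in a streamlined form. Introduce the closed subspace $T^k(M) = \{X \in C^k_c(D,\R^d): X|_M = 0\}$ of $C^k_c(D,\R^d)$. By part (i) of Theorem~\ref{thm:structure_theorem-1}-type reasoning (which here is supplied by Nagumo's invariance applied to $M$ as a submanifold, so that $X|_M = 0$ forces $\Phi_t(M) = M$), the functional $dJ(M)$ factors through the quotient $Q^k(M)$. Composing with the inverse of the (continuous, surjective) restriction $\tilde{\mathfrak J}_M: Q^k(M) \to C^k(M,\R^d)$, realised via a continuous extension operator $E: C^k(M,\R^d) \to C^k_c(D,\R^d)$, gives a continuous linear $\tilde h: C^k(M,\R^d) \to \R$ satisfying $\tilde h(X|_M) = dJ(M)(X)$.

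Next I would invoke Lemma~\ref{lem:splitting_vector_field}, which in the absence of boundary simplifies: every $X \in C^k_c(D,\R^d)$ decomposes on $M$ as $X = X^{\mathfrak t} + X^\bot$ with $X^{\mathfrak t}_p \in T_pM$ and $X^\bot_p \in (T_pM)^\bot$ for all $p \in M$, and both pieces extend to elements of $C^k_c(D,\R^d)$. Since $X^{\mathfrak t}$ is tangent to $M$ everywhere, Proposition~\ref{prop:nagumo} (with the second condition trivially satisfied as $\partial M = \emptyset$) gives $\Phi_t^{X^{\mathfrak t}}(M) = M$ for all small $t$, hence $dJ(M)(X^{\mathfrak t}) = 0$ and therefore $\tilde h(X^{\mathfrak t}|_M) = 0$. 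By linearity we conclude $dJ(M)(X) = \tilde h(X^\bot|_M)$, and defining $h := \tilde h$ finishes the proof.

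There is no genuine obstacle here beyond what was already handled in the proof of Theorem~\ref{thm:structure}; the only bookkeeping is to note that the splitting is now binary rather than ternary and that the quotient $\mathfrak X^k(M)/\tilde T^k(M)$ step from the main theorem, which isolated the normal contribution on $\partial M$, is no longer needed.
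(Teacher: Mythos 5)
Your proposal is correct and follows the same route as the paper, which presents this corollary as an immediate special case of Theorem~\ref{thm:structure}: with $\partial M=\emptyset$ the boundary functional $g$ is vacuous and only $h(X^\bot)$ survives. Your streamlined self-contained version (binary splitting $X=X^{\mathfrak t}+X^\bot$, Nagumo for the tangential part, factoring through $Q^k(M)$ via the extension operator) is just the proof of the main theorem with the boundary bookkeeping stripped out, so nothing new is needed.
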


\begin{cor}\label{thm:structure1} 
Let $M$ be a closed and bounded $d$-dimensional $C^{k+1}$-submanifold of $\R^d$. Suppose that $J$ 
is 
shape differentiable at $M$ and assume that $dJ(M)$ is 
of order $k$. Then there exists a continuous functional 
$ g: C^k(\partial M) \rightarrow \R$, such that 
$$ dJ(M)(X) =  g(X_{|\partial M}\cdot \nu)\quad \text{ for all } X\in C^k_c(D,\R^d). $$
\end{cor}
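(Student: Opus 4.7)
The statement is the case $m=d$ of Theorem~\ref{thm:structure}, so I would prove it as an immediate specialization: when the submanifold has the same dimension as the ambient space, the normal bundle $TM^\bot$ collapses to zero and only the boundary term in \eqref{eq:struc_manifold} survives.

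The first step is to verify that $(T_pM)^\bot=\{0\}$ at every $p\in M$. For a $d$-dimensional $C^{k+1}$-submanifold $M\subset\R^d$, any chart $(U,\varphi)$ around $p$ produces $d$ basis vectors $\partial_{x_1}\varphi^{-1}(\varphi(p)),\dots,\partial_{x_d}\varphi^{-1}(\varphi(p))$ spanning $T_pM$; since these are $d$ linearly independent vectors in $\R^d$, they span all of $\R^d=T_p\R^d$. This holds equally at interior and boundary points. Consequently the orthogonal projection $\mathfrak p^\bot_{T_pM}$ is identically zero on $M$, and for every $X\in C^k_c(D,\R^d)$ the normal component $X^\bot=\mathfrak p^\bot_{T_pM}(X|_M)$ vanishes.

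The second step is to invoke Theorem~\ref{thm:structure} to obtain continuous functionals $h:C^k(M,\R^d)\to\R$ and $g:C^k(\partial M)\to\R$ with
\[dJ(M)(X)=h(X^\bot)+g(X|_{\partial M}\cdot\nu),\qquad X\in C^k_c(D,\R^d).\]
Substituting $X^\bot\equiv 0$ and using that $h$ is linear (which follows from the quotient construction of $\tilde h$ in the proof of Theorem~\ref{thm:structure} together with the linearity of $X \mapsto dJ(M)(X)$ coming from shape differentiability) yields $h(X^\bot)=h(0)=0$, leaving the desired identity $dJ(M)(X)=g(X|_{\partial M}\cdot\nu)$. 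The continuity of $g$ is inherited directly from the theorem.

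The only subtle point --- rather than a genuine obstacle --- is recognising that $h$ is linear and not merely continuous, so that $h(0)=0$; this is transparent from the proof of Theorem~\ref{thm:structure}, where $\tilde h$ is defined by composing the quotient projection with the linear map $dJ(M)$. Beyond that, the corollary is pure dimension bookkeeping.
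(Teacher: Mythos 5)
Your proposal is correct and matches the paper's intent: the paper states this corollary without proof as an immediate special case of Theorem~\ref{thm:structure}, and your argument simply makes explicit why the term $h(X^\bot)$ drops out when $m=d$ (namely $T_pM=\R^d$, so $X^\bot\equiv 0$ and $h(0)=0$ by linearity). Nothing further is needed.
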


\section{Application to shape functions}\label{sec:5}
\subsection{Shape functions defined on smoothly cracked sets}
Cracked sets naturally  arise in fracture mechanics, where they model damage 
of  solids; cf. \cite{Fremond}. Cracked sets are highly irregular and do not even satisfy the 
cone property, but the crack itself is often assumed to be Lipschitz 
continuous or smoother. In order to forcast the propagation of a crack it is essential 
to compute shape derivative in cracked sets. For PDE constrained shape 
functions, the derivation of 
the shape differentiability at a cracked set 
\cite{crack_hoem,crack_soko,crack_soko2} or smooth sets \cite{delsturm,sturm0,sturm3,desaintzolesio,sokozol,ferchichizolesio} is a challenge 
itself. Here we are interested in the exact 
structure of the shape derivative in cracked sets and will assume that the 
shape function is shape differentiable.

\begin{defin}\label{def:crack}
	Let $\Omega\subset \R^d$ be an open and bounded set. 
	\begin{itemize}
		\item[(i)] The set $\Omega$ is called {\bf crack free} if  
	$\text{int}(\overline \Omega) = \Omega$, otherwise we call $\Omega$ cracked. 
\item[(ii)] The set $\Omega\subset \R^d$ is said to be {\bf smoothly $l$-cracked}, 
	$l\ge 1$, if there is 
an open subset $\tilde \Omega\subset \R^d$ with $C^k$-boundary 
$\fr(\tilde \Omega)$, $k\ge 1$, and a  closed, bounded and simply connected $l$-dimensional $C^k$-submanifold $\Sigma \subset \tilde \Omega$ of $\R^d$, such that  
$\Omega =\tilde \Omega \setminus \Sigma $.
\end{itemize}
\end{defin}
\begin{rem}
Note that every  open subset $\Omega \subset \R^d$ with $C^k$-boundary 
$\fr(\Omega)$ is crack-free, so that part (ii) of Definition 
\ref{def:crack} makes sense. In particular, a smoothly cracked set 
can not have any further cracks except $\Sigma$. 
\end{rem}
Now we want to verify that the shape derivative in a smoothly cracked set 
can be obtained as the shape derivative of a shape function depending on the  
only depending the  on the crack itself.

\begin{lem}\label{lem:crack_vs_manifold}
Suppose that $\Omega \subset \R^d$ is smoothly $l$-cracked of class $C^k$ with $C^k$-set 
$\tilde \Omega\subset \R^d$ and a $l$-dimensional $C^k$-submanifold 
$\Sigma\subset \R^d$, $k\ge 1$, such that  $\Omega = \tilde  \Omega \setminus \Sigma$. 
Set $M:= \Sigma$.  Let $\Omega \mapsto J(\Omega)$ be a shape functions and define $\tilde J(M):= J(\Omega \setminus M)  $.  
Then
$$d\tilde J(M)(X) = 	 dJ(\Omega)(X),$$
where $X\in C^k_c(\tilde \Omega, \R^d),$ if either of the two expressions exists. 
\end{lem}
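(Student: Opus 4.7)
The plan is to reduce the equality of Eulerian semi-derivatives to a single set-theoretic identity,
\begin{equation*}
\Phi_t(\Omega) = \tilde\Omega \setminus \Phi_t(M),
\end{equation*}
valid for all sufficiently small $t$, and then to read off equality of the two difference quotients term-by-term before passing to the limit.

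First I would verify that the flow $\Phi_t$ of $X$ leaves the smooth outer domain $\tilde\Omega$ invariant. Because $X\in C^k_c(\tilde\Omega,\R^d)$ has compact support inside the open set $\tilde\Omega$, and $\fr(\tilde\Omega)$ is $C^k$, the field $X$ vanishes in a neighbourhood of $\fr(\tilde\Omega)$; in particular $X\cdot\nu = 0$ on $\fr(\tilde\Omega)$. Corollary~\ref{cor:nagumo-1} then applies and gives $\Phi_t(\tilde\Omega) = \tilde\Omega$ for all $|t|$ sufficiently small.

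Second, since $\Phi_t\colon\R^d\to\R^d$ is a global $C^k$-diffeomorphism (hence a bijection), it commutes with set difference: $\Phi_t(A\setminus B) = \Phi_t(A)\setminus \Phi_t(B)$ for arbitrary $A,B\subset\R^d$. Taking $A=\tilde\Omega$ and $B=\Sigma=M$ and combining with the previous step,
\begin{equation*}
\Phi_t(\Omega) = \Phi_t(\tilde\Omega\setminus M) = \Phi_t(\tilde\Omega)\setminus \Phi_t(M) = \tilde\Omega\setminus \Phi_t(M),
\end{equation*}
which is exactly the identity announced above.

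Finally, by the defining relation $\tilde J(M') = J(\tilde\Omega\setminus M')$, the previous display rewrites as $J(\Phi_t(\Omega)) = \tilde J(\Phi_t(M))$. Since $J(\Omega) = \tilde J(M)$, the difference quotients
\begin{equation*}
\frac{J(\Phi_t(\Omega))-J(\Omega)}{t} \;=\; \frac{\tilde J(\Phi_t(M))-\tilde J(M)}{t}
\end{equation*}
coincide for every sufficiently small $t>0$, and letting $t\searrow 0$ yields $dJ(\Omega)(X)=d\tilde J(M)(X)$ whenever either limit exists. The only real obstacle is ensuring that the flow cannot push mass across $\fr(\tilde\Omega)$; this is precisely handled by the compact-support hypothesis on $X$ together with Corollary~\ref{cor:nagumo-1}, after which every remaining step is purely set-theoretic.
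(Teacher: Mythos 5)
Your proposal is correct and follows essentially the same route as the paper: both arguments rest on the identity $\Phi_t(\Omega)=\Phi_t(\tilde\Omega)\setminus\Phi_t(\Sigma)=\tilde\Omega\setminus\Phi_t(\Sigma)$, after which the two difference quotients coincide term by term. Your version merely spells out the invariance $\Phi_t(\tilde\Omega)=\tilde\Omega$ (which also follows simply because $\Phi_t$ is the identity outside $\supp X\subset\subset\tilde\Omega$) and the injectivity argument that the paper leaves implicit.
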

\begin{proof}
As $X\in C^k_c(\tilde \Omega, \R^d)$ it evident that for all $t$
$$ \Phi_t(\Omega) = \Phi_t(\tilde \Omega\setminus \Sigma) = \Phi_t(\tilde \Omega) \setminus \Phi_t(\Sigma) = \tilde \Omega \setminus \Phi_t(\Sigma)	.	$$
From this the conclusion of the lemma follows.  
\end{proof}
This lemma shows that shape functions depending on  smoothly cracked sets can be 
seen as shape functions only depending on the crack itself. Next, we consider the special situation of a shape function defined on smoothly $1$-cracked sets in $\R^2$; cf. \cite{MR1803575}.  
\begin{lem}
	Let $\Omega$ be a smoothly $l$-cracked subset of $\R^2$ of class $C^2$. 
	By definition there are an open and bounded set $C^2$-set $\tilde \Omega \subset \R^2$ and a 
	 closed, bounded, and simply connected $l$-dimensional submanifold 
	$\Sigma\subset \tilde \Omega$ of class $C^2$, such that 
	$\Omega = \tilde \Omega \setminus \Sigma$.  Set $M:=\Sigma$, $\partial M = \{A,B\}$, and 
	suppose that  $dJ( \Omega):C^1_c(\tilde \Omega, \R^2) \rightarrow \R$ 
	is linear and  continuous. Then there are real numbers 
$\alpha_1, \alpha_2$ and a linear and continuous functional $\bar h:C^1(M)\rightarrow \R$, such that  
\ben\label{eq:alpha_crack}
dJ(\Omega )(X)= \alpha_1 (X\cdot \nu)(A) + \alpha_2 (X\cdot \nu)(B) + \bar 
h(X_{| M} \cdot \mathfrak n)
\een
for all $X\in C^k_c(\tilde \Omega,\R^2)$, where $\mathfrak n$ is a unit normal field along $M$ and $\nu$ the unit normal vector field on 
$\partial M$. 
\end{lem}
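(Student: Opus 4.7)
The plan is to reduce the cracked-set situation to the submanifold structure theorem and then specialize the two resulting pieces to the one-dimensional case in $\R^2$. First I would apply Lemma \ref{lem:crack_vs_manifold} with $M := \Sigma$ to transfer the hypothesis: setting $\tilde J(M) := J(\tilde\Omega \setminus M)$, the identity $d\tilde J(M)(X) = dJ(\Omega)(X)$ on $C^1_c(\tilde\Omega, \R^2)$ shows that $\tilde J$ is shape differentiable at $M$ and that $d\tilde J(M)$ is linear and continuous (of order at most $1$). Since $M$ is a closed, bounded, simply connected $1$-dimensional $C^2$-submanifold of $\R^2$ sitting in the bounded open set $\tilde\Omega$, Theorem \ref{thm:structure} applies with $k=1$, $m=1$, $d=2$ and yields continuous linear functionals $h: C^1(M,\R^2) \to \R$ and $g: C^1(\partial M) \to \R$ with
\ben\label{eq:plan-main}
dJ(\Omega)(X) = h(X^\bot) + g(X|_{\partial M} \cdot \nu),
\een
where $X^\bot = \mathfrak p_{T_pM}^\bot(X|_M)$ and $\nu$ is the outward-pointing unit normal field along $\partial M$.

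Next I would specialize the first term. Since $M$ has codimension one in $\R^2$, the normal bundle $TM^\bot$ has rank one; using that $M$ is simply connected and $C^2$, I pick a globally defined $C^1$ unit normal field $\mathfrak n: M \to \R^2$ (no orientation obstruction in this setting). Then for every $X \in C^1_c(\tilde\Omega, \R^2)$ one has the pointwise identity $X^\bot = (X|_M \cdot \mathfrak n)\,\mathfrak n$ on $M$. Defining
\benn
\bar h(\varphi) := h(\varphi\,\mathfrak n), \qquad \varphi \in C^1(M),
\eenn
gives a linear and continuous functional $\bar h: C^1(M) \to \R$ (continuity follows because $\varphi \mapsto \varphi \, \mathfrak n$ is continuous from $C^1(M)$ into $C^1(M,\R^2)$), and $h(X^\bot) = \bar h(X|_M \cdot \mathfrak n)$.

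For the second term, I use $\partial M = \{A,B\}$: the space $C^1(\partial M)$ is canonically isomorphic to $\R^2$ via $\varphi \mapsto (\varphi(A), \varphi(B))$, so the continuous linear functional $g$ is necessarily of the form $g(\varphi) = \alpha_1 \varphi(A) + \alpha_2 \varphi(B)$ for unique reals $\alpha_1, \alpha_2$. Taking $\varphi = X|_{\partial M} \cdot \nu$ produces
\benn
g(X|_{\partial M} \cdot \nu) = \alpha_1 (X\cdot \nu)(A) + \alpha_2 (X\cdot \nu)(B),
\eenn
and inserting this together with the rewriting of $h(X^\bot)$ into \eqref{eq:plan-main} gives exactly \eqref{eq:alpha_crack}.

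The argument contains no serious obstacle; the two points that deserve care are the existence of a globally continuous unit normal $\mathfrak n$ on $M$ (which relies on simple connectedness to rule out non-orientability of a codimension-one subbundle) and the verification that $X^\bot$ is genuinely the multiple of $\mathfrak n$ by the scalar function $X|_M \cdot \mathfrak n$, both of which are immediate in codimension one. Everything else is a direct unpacking of Theorem \ref{thm:structure} once the problem has been rewritten as a shape function on the submanifold $M$.
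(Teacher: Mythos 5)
Your proposal is correct and follows essentially the same route as the paper: reduce to the submanifold setting via Lemma~\ref{lem:crack_vs_manifold}, apply Theorem~\ref{thm:structure}, identify $g$ on the two-point boundary $\{A,B\}$ with a pair of reals, and rewrite $h(X^\bot)$ as $\bar h(X|_M\cdot\mathfrak n)$ using the rank-one normal bundle. Your added remark justifying the global unit normal $\mathfrak n$ via simple connectedness is a small point the paper leaves implicit, but it does not change the argument.
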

\begin{proof}
Taking into account 
Lemma~\ref{lem:crack_vs_manifold} we see that we can
apply Theorem \ref{thm:structure} to 
$M\mapsto \tilde J(M) := J(\tilde \Omega \setminus M)$ and obtain linear functionals 
$g:C^1(\partial M )\rightarrow \R$ and $h:C^1(M,\R^d)\rightarrow \R$, such that 
\ben\label{eq:struc_crack}
d\tilde J(M)(X) = g(X_{|\partial M}\cdot \nu) + h(X^\bot).
\een
We have $\partial M = \{A,B\}$ and thus 
$C^k(\partial M) = \{ f: \{A,B\}\rightarrow \R \}$. 
We may define a basis $f_1,f_2:\partial M \rightarrow \R$ of $C^k(\partial M)$ 
by $f_1(A) := 1$, $f_1(B) := 0$
and $f_2(A) := 0$, $f_2(B) := 1$. Then every 
$f\in C^k(\partial M)$ can be written as $f= \alpha_1 f_1 + \alpha_2 f_2$.  
In particular, we have 
$X_{|\partial M}  \cdot \nu =   (X\cdot \nu)(A) f_1 +   (X\cdot \nu)(B) f_2$
 so that 
 \ben\label{eq:g_crack}
 g(X_{|\partial M} \cdot \nu) =\alpha_1 (X\cdot \nu)(A) +  \alpha_2 (X\cdot \nu)(B),
 \een
where $\alpha_1 :=g(f_1) $ and $\alpha_2 := g(f_2)$. 
 Denote by 
$\mathfrak n$ the unit normal field along $M$.  Then 
$X^\bot_{| M} = (X_{|M}  \cdot \mathfrak n) \mathfrak n $ 
and thus 
\ben\label{eq:h_crack}
h(X^\bot_{| M}) = h((X_{| M}\cdot \mathfrak n) \mathfrak n)
\een
for all $X\in C^k_c(\tilde \Omega, \R^2)$. 
Setting $\bar h(v) := h(v\mathfrak n)  $, we recover \eqref{eq:alpha_crack}.
\end{proof}

\begin{rem}
We may describe the crack $\Sigma$ by an embedded curve 
$\gamma:[a,b] \rightarrow \R^d$ of class $C^2$, that is,
$\gamma([a,b])  =: \Sigma\subset \tilde \Omega$ and $\gamma(a)=A$ and 
$\gamma(b) = B$. 
Then $\nu\circ \gamma(a) = \gamma'(a)/|\gamma'(a)|$ and 
$\nu\circ \gamma(b) = -\gamma'(b)/|\gamma'(b)|$. 
\end{rem}

\begin{cor}
Let $ \Omega \subset D\subset \R^d$ be a smoothly $1$-cracked set  such that  
$\Omega = \tilde \Omega \setminus \Sigma$, where $\tilde \Omega$ is an open 
and bounded  set of 
class $C^\infty$ and $M:=\Sigma$ is a closed, bounded and simply connected 
$l$-dimensional submanifold of $\R^d$ of class $C^\infty$. Let $J$ be a 
shape function and suppose that  
$dJ(\Omega):C^1_c(\tilde \Omega, \R^d) \rightarrow \R$ is continuous and 
linear. 
Let $X \in C^1_c(D,\R^d)$. Then there are continuous and linear functionals 
$\bar h_1, \ldots, \bar h_{d-1}:C^1(M)\rightarrow \R$ and real numbers 
$\alpha_1, \alpha_2$ such that
\ben\label{eq:crack_dim_one}
dJ(\Omega )(X) = \alpha_1 (X\cdot \nu)(A) + \alpha_2 (X \cdot \nu)(B) + 
\sum_{i=1}^{d-1}\bar h_i(X_{| M}\cdot \mathfrak n_i)
\een
for all $X\in C^k_c(\tilde \Omega, \R^d)$, where $(\mathfrak n_1, \ldots, \mathfrak n_{d-1})$ is an orthonormal frame along $M$ 
satisfying\\
$\text{span}\{\mathfrak n_1(p), \ldots, \mathfrak n_{d-1}(p)\} = (T_pM)^\bot$ for all $p\in M$. 
\end{cor}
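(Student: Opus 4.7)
The plan is to reduce to the smooth submanifold setting via Lemma~\ref{lem:crack_vs_manifold}, apply the main structure theorem (Theorem~\ref{thm:structure}), and then expand the two pieces in bases adapted to $\partial M$ and to the normal bundle $TM^\bot$.

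First, since $X\in C^1_c(D,\R^d)$ need not have support in $\tilde\Omega$, I would begin by reducing to that case. Because $\Sigma=M$ is compact and contained in $\tilde\Omega$, we can pick a cutoff $\chi\in C^\infty_c(\tilde\Omega)$ with $\chi\equiv 1$ on a neighbourhood of $M$; the vector field $\chi X$ then belongs to $C^1_c(\tilde\Omega,\R^d)$ and agrees with $X$ in a neighbourhood of $M$. By Lemma~\ref{lem:crack_vs_manifold} (combined with the fact, which the previous proofs make clear, that $dJ(\Omega)$ factors through restrictions to the relevant boundary/crack set) one has $dJ(\Omega)(X)=dJ(\Omega)(\chi X)=d\tilde J(M)(\chi X)$ for $\tilde J(M):=J(\tilde\Omega\setminus M)$. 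Thus we may henceforth work with $\tilde J$ and vector fields supported in $\tilde\Omega$, and $\tilde J$ is shape differentiable at $M$ with $d\tilde J(M)$ of order~$1$.

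Next, I would apply Theorem~\ref{thm:structure} to $\tilde J$ at $M$ to obtain continuous linear functionals $h:C^1(M,\R^d)\to\R$ and $g:C^1(\partial M)\to\R$ such that
\[
dJ(\Omega)(X)\;=\;d\tilde J(M)(X)\;=\;h(X^\bot)+g(X_{|\partial M}\cdot\nu).
\]
For the boundary term I would copy the argument of the preceding lemma verbatim: $\partial M=\{A,B\}$, so $C^1(\partial M)\cong\R^2$ with basis $f_1,f_2$ determined by $f_1(A)=1,f_1(B)=0$ and $f_2(A)=0,f_2(B)=1$, and setting $\alpha_1:=g(f_1)$, $\alpha_2:=g(f_2)$ yields $g(X_{|\partial M}\cdot\nu)=\alpha_1(X\cdot\nu)(A)+\alpha_2(X\cdot\nu)(B)$.

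For the normal term I need a global orthonormal frame of the normal bundle. Here lies the only genuine subtlety: since $M$ is one-dimensional, simply connected, compact with boundary $\{A,B\}$, it is diffeomorphic to the closed interval, hence contractible, and therefore its normal bundle $TM^\bot$ is trivial. Starting from any smooth global frame (obtainable by pulling back a frame from the interval) and applying Gram--Schmidt, I obtain a $C^\infty$ orthonormal frame $(\mathfrak n_1,\dots,\mathfrak n_{d-1})$ with $\mathrm{span}\{\mathfrak n_1(p),\dots,\mathfrak n_{d-1}(p)\}=(T_pM)^\bot$ for all $p\in M$. Then for every $X\in C^1_c(D,\R^d)$,
\[
X^\bot\;=\;\mathfrak p_{T_pM}^\bot(X|_M)\;=\;\sum_{i=1}^{d-1}(X_{|M}\cdot\mathfrak n_i)\,\mathfrak n_i.
\]
Defining $\bar h_i:C^1(M)\to\R$ by $\bar h_i(v):=h(v\,\mathfrak n_i)$, each $\bar h_i$ is linear and continuous since $v\mapsto v\,\mathfrak n_i$ is continuous from $C^1(M)$ to $C^1(M,\R^d)$ and $h$ itself is continuous. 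By linearity of $h$,
\[
h(X^\bot)\;=\;\sum_{i=1}^{d-1}\bar h_i(X_{|M}\cdot\mathfrak n_i),
\]
which combined with the boundary expression gives exactly \eqref{eq:crack_dim_one}. The main obstacle is the frame construction, and it is resolved precisely because simple connectedness of a $1$-dimensional $M$ forces triviality of $TM^\bot$; without this hypothesis the decomposition would only be local and one would have to patch with a partition of unity on $M$.
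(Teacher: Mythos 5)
Your proof is correct and follows essentially the same route as the paper: reduce to the crack $M$ via Lemma~\ref{lem:crack_vs_manifold}, apply Theorem~\ref{thm:structure}, expand $g$ on the two-point boundary $\{A,B\}$, and expand $h(X^\bot)$ in an orthonormal frame of $TM^\bot$ with $\bar h_i(v):=h(v\,\mathfrak n_i)$. The one point where you go beyond the paper is welcome rather than divergent: you actually justify the existence of the global orthonormal frame (triviality of the normal bundle over the contractible $M\cong[0,1]$), which the paper simply takes for granted in the statement.
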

\begin{proof}
From the previous lemma, we obtain
$ dJ(\Omega)(X) = \alpha_1 (X\cdot \nu)(A) + \alpha_2 (X\cdot \nu)(B) + \bar h(X^\bot),
$
and hence taking into account 
$X^\bot = ( X  \cdot \mathfrak n_1 )\mathfrak n_1 + \cdots + ( X \cdot \mathfrak n_{d-1} )\mathfrak n_{d-1}  $
we arrive at 
$ dJ(\Omega)(X) = \alpha_1 (X\cdot \nu)(A) + \alpha_2 (X\cdot \nu)(B) + \sum_{i=1}^{d-1} \bar h( (X \cdot \mathfrak n_i ) \mathfrak n_i)$.
So setting $\bar h_i(v) := h(v\mathfrak n_i) $, we recover \eqref{eq:crack_dim_one}.

\end{proof}

\subsection{Shape functions defined on submanifolds of dimension one and two}

\paragraph{Length variation of a curve in $\R^3$}
Let $\gamma:[a,b] \rightarrow \R^3$ be an embedded curve of class $C^2$ so that 
$M:= \gamma([a,b])$ becomes a one dimensional $C^2$-submanifold of $\R^3$ with boundary 
$\partial M =\{\gamma(a), \gamma(b)\}$.
 We consider the shape function 
 $$ J(M) := \int_a^b |\gamma'(t)|\,dt   $$
and  denote by 
 $
 T(t):= \gamma'(t)/  | \gamma'(t)|,$ $ N(t) := T'(t)/ |T'(t)|$, and $ B(t) 
 := T(t)\times N(t)  $
the tangential, normal and binormal  vector field along $\gamma$, respectively. If $\gamma$ 
is arc-length parametrised, then we define the curvature $\kappa$ of $\gamma$  by 
$T' = \kappa N$.  If $\gamma$ is not arc-length parametrised, then we 
have 
 $ T'   =  v\kappa N$
 on $[a,b]$, where $v(t) := |\gamma'(t)|$. Let 
$\mathfrak n,\mathfrak t, \mathfrak b:M\rightarrow \R^3$ be  unit vector fields, such that 
$ T = \mathfrak t \circ \gamma$,  $N= \mathfrak n\circ \gamma$, and $ B =  \mathfrak b \circ \gamma $.
\begin{lem}
Let $D$ be an open and bounded set of $\R^3$ containing $M$ and let 
$X \in C^2_c(D,\R^3)$. 
Then
$$dJ(M)(X) =  \int_a^b \frac{ \gamma'(t) \cdot (\partial X \circ \gamma(t)) \gamma'(t)}{|\gamma'(t)|} \,dt $$
or equivalently
\ben\label{eq:structure}
 dJ(M)(X) = \int_a^b  \kappa(t)  (X^\bot \cdot \mathfrak n)\circ \gamma(t) \; |\gamma'(t)|  
 \,dt + (X \cdot \mathfrak t)(\gamma(b))-(X\cdot \mathfrak t)(\gamma(a)), 
 \een
 where 
 $X^\bot  = (X\cdot \mathfrak n) \mathfrak n +  (X\cdot \mathfrak b)  \mathfrak b$. 
 
\end{lem}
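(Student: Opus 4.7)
The plan is to compute the Eulerian semi-derivative directly from the definition, exploiting the fact that the flow $\Phi_t$ of $X$ gives an explicit parametrisation of the perturbed curve. Since $\Phi_t(M)=(\Phi_t\circ\gamma)([a,b])$ and, for $t$ small, $\Phi_t\circ\gamma$ is still an embedded $C^2$-curve, the arc-length functional rewrites as
\[
J(\Phi_t(M)) \;=\; \int_a^b \bigl|(\Phi_t\circ\gamma)'(s)\bigr|\,ds \;=\; \int_a^b \bigl|\partial\Phi_t(\gamma(s))\,\gamma'(s)\bigr|\,ds.
\]
Differentiating under the integral at $t=0$, using $\partial_t\Phi_t|_{t=0}=X$ so that $\partial_t\partial\Phi_t|_{t=0}=\partial X$, together with the identity $\tfrac{d}{dt}|v(t)|=v\cdot v'/|v|$ applied to $v(t)=\partial\Phi_t(\gamma(s))\gamma'(s)$, produces the first displayed formula. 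The uniform bounds on the $t$-derivative of the integrand needed to interchange limit and integral follow from the $C^2$ regularity of $X$ on a compact neighbourhood of $\gamma([a,b])$.

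To derive the second, equivalent form, I rewrite the integrand using $\gamma'(s)=|\gamma'(s)|(\mathfrak{t}\circ\gamma)(s)$:
\[
\frac{\gamma'(s)\cdot(\partial X(\gamma(s)))\gamma'(s)}{|\gamma'(s)|}
\;=\;(\mathfrak{t}\circ\gamma)(s)\cdot(X\circ\gamma)'(s),
\]
where the chain-rule identity $(X\circ\gamma)'(s)=\partial X(\gamma(s))\gamma'(s)$ is used. I then integrate by parts on $[a,b]$: the boundary evaluations yield $(X\cdot\mathfrak{t})(\gamma(b))-(X\cdot\mathfrak{t})(\gamma(a))$, while the remaining integral carries $(\mathfrak{t}\circ\gamma)'=T'$. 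Inserting the Frenet relation $T'=v\kappa N=\kappa|\gamma'|\,(\mathfrak{n}\circ\gamma)$ for a non arc-length parametrisation produces the curvature term. Since $\mathfrak{t}\perp\mathfrak{n}$, the projection $X^\bot=(X\cdot\mathfrak{n})\mathfrak{n}+(X\cdot\mathfrak{b})\mathfrak{b}$ of Lemma~\ref{lem:splitting_vector_field} satisfies $X^\bot\cdot\mathfrak{n}=X\cdot\mathfrak{n}$, so the curvature integrand can be expressed equivalently in terms of $X^\bot$, as required.

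The main obstacle is purely bookkeeping: carefully tracking the factor $v=|\gamma'|$ throughout the non arc-length calculation, and matching the endpoint contributions $(X\cdot\mathfrak{t})(\gamma(\cdot))$ with the pairing $X\cdot\nu$ predicted by Theorem~\ref{thm:structure}, using the identifications $\nu(\gamma(a))=\mathfrak{t}(\gamma(a))$ and $\nu(\gamma(b))=-\mathfrak{t}(\gamma(b))$ (up to orientation) discussed in the remarks following Proposition~\ref{prop:nagumo}. Conceptually the result is already anticipated by Theorem~\ref{thm:structure}, which predicts exactly the structure $h(X^\bot)+g(X|_{\partial M}\cdot\nu)$; the computation above simply identifies the explicit functionals as a weighted curvature integral and a pair of endpoint evaluations.
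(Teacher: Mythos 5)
Your proposal follows essentially the same route as the paper's own proof: parametrise the perturbed curve by $\Phi_t\circ\gamma$, differentiate under the integral at $t=0$ to get the first displayed formula, rewrite the integrand as $(X\circ\gamma)'\cdot T$ by the chain rule, and integrate by parts using the Frenet relation $T'=v\kappa N$, with the endpoint terms giving $(X\cdot\mathfrak t)(\gamma(b))-(X\cdot\mathfrak t)(\gamma(a))$. One caveat, shared with the paper's computation: with the convention $N=T'/|T'|$ (so $\kappa\ge 0$), the integration by parts produces $-\int_a^b v\kappa\,(X\cdot\mathfrak n)\circ\gamma\,dt$ rather than the $+$ sign appearing in \eqref{eq:structure}, so the sign of the curvature term deserves a second look in both your write-up and the stated formula.
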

\begin{proof}
 We compute
\ben\label{eq:per}
\begin{split}
dJ(M)(X)  =& \frac{d}{ds} \left( \int_a^b |(\partial \Phi_s\circ \gamma(t)) 
\gamma'(t)|\, dt \right)\bigg|_{s=0} \\
=&\int_a^b| \frac{ \gamma'(t) \cdot (\partial X \circ \gamma(t)) \gamma'(t)}{|\gamma'(t)|} \,dt\\
=& \int_a^b (X(\gamma(t))' \frac{ \gamma'(t)  }{|\gamma'(t)|}\,dt\\
=&   \int_a^b |\gamma'(t)| \kappa(t)  (X\circ \gamma(t))\cdot N(t)  \,dt + X(\gamma(b)) \cdot T (b)-X(\gamma(a)) \cdot T (a).
\end{split}
\een
 From this the result follows.  
\end{proof}

\begin{cor}
Let $D$ be an open and bounded set in $\R^3$ containing $M$ and let 
$X \in C^2_c(D,\R^3)$.  Suppose that $\gamma:[a,b]\rightarrow \R^2$ is a simply closed
$C^2$-curve. Then 
$$
dJ(M)(X) = \int_a^b  \kappa  (X^\bot \cdot \mathfrak n)\circ \gamma(t) \; |\gamma'(t)|  
\,dt. $$
 \end{cor}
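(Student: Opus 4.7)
The plan is to deduce the corollary as a direct corollary of the formula \eqref{eq:structure} established in the preceding lemma, by observing that the boundary contribution at the endpoints vanishes once $\gamma$ is closed.

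First I would invoke \eqref{eq:structure}, which for any $C^2$-embedded arc $\gamma:[a,b]\to\R^3$ gives
\benn
dJ(M)(X) = \int_a^b \kappa(t)\,(X^\bot\cdot\mathfrak n)\circ\gamma(t)\,|\gamma'(t)|\,dt + (X\cdot\mathfrak t)(\gamma(b)) - (X\cdot\mathfrak t)(\gamma(a)).
\eenn
This is valid as written, since its derivation in the lemma only used the integration by parts identity along the parametrisation; it does not require $\gamma$ to be injective on the whole interval $[a,b]$.

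Next, I would use the hypothesis that $\gamma$ is a simply closed $C^2$-curve. By this I mean that $\gamma(a) = \gamma(b)$ and that the periodic extension of $\gamma$ is $C^2$, so in particular $\gamma'(a)=\gamma'(b)$; equivalently, the image $M = \gamma([a,b])$ is a closed one-dimensional $C^2$-submanifold of $\R^3$ without boundary (topologically a circle), and the unit tangent field $\mathfrak t: M \to \R^3$ defined by $\mathfrak t\circ \gamma = \gamma'/|\gamma'|$ extends continuously through the closure point. Consequently, $\mathfrak t(\gamma(a)) = \mathfrak t(\gamma(b))$, and since $X$ is a fixed vector field on $\R^3$ evaluated at the single point $\gamma(a)=\gamma(b)$, the endpoint contributions cancel:
\benn
(X\cdot\mathfrak t)(\gamma(b)) - (X\cdot\mathfrak t)(\gamma(a)) = 0.
\eenn
Substituting this into \eqref{eq:structure} yields the claimed identity.

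There is no real obstacle here; the only point worth making explicit is the compatibility of the endpoints of $\gamma$ in both position and tangent direction, which is precisely what ``simply closed $C^2$'' encodes and which makes $\partial M = \emptyset$. Note that the resulting formula is also consistent with Corollary \ref{cor:structure1}, which predicts in the boundaryless case that $dJ(M)$ depends only on the normal part $X^\bot$.
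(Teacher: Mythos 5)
Your proposal is correct and is essentially the argument the paper intends (the corollary is stated without proof there): one specializes formula \eqref{eq:structure} and observes that for a simply closed $C^2$-curve the endpoint terms $(X\cdot\mathfrak t)(\gamma(b))-(X\cdot\mathfrak t)(\gamma(a))$ cancel because $\gamma(a)=\gamma(b)$ and $\gamma'(a)=\gamma'(b)$. Your explicit remarks that the integration-by-parts derivation does not need injectivity on all of $[a,b]$ and that the result is consistent with Corollary \ref{cor:structure1} are both accurate and welcome.
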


\paragraph{Variation of the surface integral in $\R^3$}
As a two dimensional example, we consider the variation of the surface 
integral of a  cylinder-like surface in $\R^3$.
We define $Q:=[a,b]\times [c,d]$ and let $\varphi:Q\rightarrow \R^3$ be a $C^2$-embedding and put 
$M:=\varphi(Q)$. We assume that $\varphi(u,c) = \varphi(u,d)$, 
$ \partial_v\varphi(u,c) = \partial_v\varphi(u,d)$ and  
$ \partial_v^2\varphi(u,c) = \partial_v^2\varphi(u,d)$ for all $u\in [a,b]$. 
Since $\varphi$ is an embedding $\varphi_u:=\partial_u \varphi$ and $\varphi_v:=\partial_v \varphi$ are linearly independent at each point 
$(u,v)$ of $Q$. Hence the unit normal 
vector to the surface $M$ is given by 
$$ N(u,v) := \frac{\varphi_u\times \varphi_v}{|\varphi_u\times \varphi_v|}. $$
Recall that the classical surface integral of 
$\varphi(Q)$ is defined by 
$$ J(M)  := \int_a^b\int_c^d |\varphi_u\times \varphi_v|\, dudv. $$ 
\begin{lem}
Let $D$ be an open and bounded set containing $M$. Suppose that $X\in C^2_c(D,\R^3)$. Then 
 \begin{align*}
dJ(M)(X) =  \int_a^b\int_c^d \partial_u(X\circ \varphi)\times \varphi_v \cdot  N \, dudv 
  + \int_a^b\int_c^d   \varphi_u\times \partial_v(X\circ \varphi) \cdot  N \, dudv
\end{align*}
which is equivalent to 
\ben\label{eq:struc_surf}
\begin{split}
dJ(M)(X)  = & \int_a^b\int_c^d H(u,v) X(\varphi(u,v))\cdot  N(u,v)  |\varphi_u\times \varphi_v|\, dudv \\
& +  \bigg[\int_c^d   ( X\cdot \nu)\circ \varphi |\varphi_v|  \, dv\bigg]^a_b,
\end{split}
\een
where $H(u,v)$ is the 
mean curvature at the surface point $\varphi(u,v)$ and 
$\nu$
the outward-pointing unit normal along $\partial M$. 
\end{lem}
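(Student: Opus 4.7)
The plan is to parametrise the perturbed surface as $\varphi_t := \Phi_t \circ \varphi : Q \to \R^3$; since $\Phi_t$ is a smooth diffeomorphism of $\R^3$, the matching conditions at $v = c$ and $v = d$ are preserved by $\varphi_t$, so $\Phi_t(M) = \varphi_t(Q)$ is again a cylinder-like surface and
\[
J(\Phi_t(M)) = \int_a^b \int_c^d |(\varphi_t)_u \times (\varphi_t)_v|\, du\, dv.
\]
I would then differentiate under the integral sign at $t = 0$, using the chain rule $\dt (\varphi_t)_u|_{t=0} = \partial_u(X\circ\varphi)$ (and analogously in $v$) together with the elementary identity
\[
\dt |a(t) \times b(t)| \;=\; N(t)\cdot \bigl(a'(t)\times b(t) + a(t)\times b'(t)\bigr), \qquad N(t) := \frac{a(t)\times b(t)}{|a(t)\times b(t)|}.
\]
This yields the first expression of the lemma directly.

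For the equivalent second form, the key is to recognise the integrand as the tangential divergence $\divv_M X$ multiplied by the area element. A short computation, using that $\varphi_u,\varphi_v$ span $T_pM$ and that $N$ is the unit normal, gives the identity
\[
\bigl[\partial_u(X\circ\varphi)\times\varphi_v + \varphi_u\times\partial_v(X\circ\varphi)\bigr]\cdot N \;=\; (\divv_M X)\circ\varphi \; |\varphi_u\times\varphi_v|,
\]
so that $dJ(M)(X) = \int_M \divv_M X\, dA$. I would then invoke the classical tangential divergence theorem on a $C^2$-surface with boundary,
\[
\int_M \divv_M X\, dA \;=\; \int_M H\,(X\cdot N)\, dA + \int_{\partial M} X\cdot\nu\, ds,
\]
with $H$ the mean curvature and $\nu$ the outward conormal along $\partial M$, thereby producing the two terms in \eqref{eq:struc_surf}.

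The last step is to identify $\partial M$ explicitly. The periodicity hypotheses $\varphi(u,c)=\varphi(u,d)$, $\partial_v\varphi(u,c)=\partial_v\varphi(u,d)$, $\partial_v^2\varphi(u,c)=\partial_v^2\varphi(u,d)$ ensure that the segments $v=c$ and $v=d$ do not belong to $\partial M$, so $\partial M$ consists of the two closed loops $v\mapsto\varphi(a,v)$ and $v\mapsto\varphi(b,v)$ with arc-length element $|\varphi_v|\, dv$. The outward conormal $\nu$ is, up to normalisation, the component of $-\varphi_u$ perpendicular to $\varphi_v$ at $u=a$ and of $+\varphi_u$ at $u=b$; the opposite orientations are exactly absorbed into the bracket notation $[\,\cdot\,]^a_b$, and we obtain the boundary term in \eqref{eq:struc_surf}.

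The principal technical obstacle is the divergence identity in the second paragraph: although classical, it requires a careful matching of cross-product and determinant identities with the surface area element. Once this identity is in hand, the remainder is a bookkeeping exercise with the orientation of $\nu$ on the two boundary circles together with a direct application of the tangential divergence theorem.
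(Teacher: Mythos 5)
Your derivation of the first formula is exactly the paper's: parametrise the moved surface by $\Phi_t\circ\varphi$ and differentiate $|(\varphi_t)_u\times(\varphi_t)_v|$ under the integral sign at $t=0$. For the passage to \eqref{eq:struc_surf}, however, you take a genuinely different route. The paper stays entirely inside the parametrisation: it integrates by parts in $u$ and in $v$, uses the periodicity hypotheses to kill the boundary contributions at $v=c,d$ (and the symmetry of $\partial_u\partial_v\varphi$ to cancel the mixed terms), and then converts $(X\circ\varphi)\times\varphi_v\cdot N_u$ and $\varphi_u\times(X\circ\varphi)\cdot N_v$ into $H\,(X\cdot N)\,|\varphi_u\times\varphi_v|$ via the Weingarten equations $N_u=\alpha_1\varphi_u+\alpha_2\varphi_v$, $N_v=\alpha_3\varphi_u+\alpha_4\varphi_v$ with $H=\alpha_1+\alpha_4$, finally identifying the remaining $u$-boundary term through $\nu\circ\varphi=\varphi_v\times N/|\varphi_v|$. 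You instead recognise the first-variation integrand as $(\divv_M X)\circ\varphi\,|\varphi_u\times\varphi_v|$ and invoke the tangential divergence (tangential Stokes) theorem $\int_M\divv_M X\,dA=\int_M H\,(X\cdot N)\,dA+\int_{\partial M}X\cdot\nu\,ds$. Both are correct and the sign conventions match, since the paper's $H=\alpha_1+\alpha_4$ is precisely $\divv_M\mathfrak n$. Your route is shorter and more conceptual, but it outsources the real work to two classical facts (first variation of area equals the integral of the tangential divergence, and the tangential divergence theorem) whose proofs are essentially the computation the paper carries out explicitly; the paper's route is self-contained and makes visible exactly where the periodicity hypotheses enter. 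Two points in your argument deserve to be made explicit rather than asserted: the divergence identity relating the parametric integrand to $\divv_M X$, and the orientation bookkeeping that turns $\int_{\partial M}X\cdot\nu\,ds$ over the two circles into the bracket $\bigl[\,\cdot\,\bigr]^a_b$ with line element $|\varphi_v|\,dv$ — you flag both, so the proposal is sound.
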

\begin{proof}
We compute
\ben\label{eq:per_surf}
\begin{split}
	dJ(M)(X) =&  \dt\left(\int_a^b\int_c^d |(\partial \Phi_t\circ \varphi)\varphi_u\times (\partial \Phi_t\circ \varphi)\varphi_v|\, dudv\right)|_{t=0}  \\
  = &\int_a^b\int_c^d  \frac{(\partial X \circ \varphi)  \varphi_u\times \varphi_v \cdot \varphi_u\times \varphi_v}{|\varphi_u\times \varphi_v|} \, dudv + \int_a^b\int_c^d  \frac{  \varphi_u\times (\partial X \circ \varphi)\varphi_v \cdot \varphi_u\times \varphi_v}{|\varphi_u\times \varphi_v|} \, dudv \\
   =& \int_a^b\int_c^d \partial_u(X\circ \varphi)\times \varphi_v \cdot  N \, dudv 
  + \int_a^b\int_c^d   \varphi_u\times \partial_v(X\circ \varphi) \cdot  N \, dudv \\
  =& -\int_a^b\int_c^d (X\circ \varphi) \times \varphi_v \cdot  N_u \, dudv 
    - \int_a^b\int_c^d   \varphi_u\times (X\circ \varphi) \cdot  N_v \, dudv \\
    & +  \bigg[\int_c^d   (X\circ \varphi)\times \varphi_v \cdot  N \, dv\bigg]^a_b   ,
\end{split}
\een
where we used  $N(u,c)=N(u,d)$ for all $a\le u \le b$ which follows from 
$\partial_v \varphi(u,c) = \partial_v \varphi(u,d)$ for all $a \le u \le b$ . 
Now since $  N^2=1$ we have $ N_u\cdot  N=0$ and $ N_v\cdot  N=0$, 
which means that $ N_u, N_v \in d_p\varphi(T_pQ)$. 
Thus we can write (Weingarten equations) 
\begin{align*}
   N_u &= \alpha_1  \varphi_u +  \alpha_2 \varphi_v \\
     N_v &= \alpha_3 \varphi_u +  \alpha_4 \varphi_v
\end{align*}
for smooth functions $\alpha_i$.  Note that $H(u,v) = \alpha_1 + \alpha_4$ 
(that is the trace of the Weingarten mapping).
Therefore using 
$ (a\times b) \cdot c = (c\times a) \cdot b = (b\times c) \cdot a, $
we get 
\ben\label{eq:X_times_phi}
\begin{split}
 (X\circ \varphi) \times \varphi_v \cdot  N_u  & =-\alpha_1 \varphi_u \times \varphi_v \cdot X \circ \varphi = -\alpha_1 N \cdot  (X \circ \varphi) |\varphi_u \times \varphi_v| \\ 
  \varphi_u\times (X\circ \varphi) \cdot  N_v  & =-\alpha_4 \varphi_u \times \varphi_v \cdot X \circ \varphi = -\alpha_4 N \cdot  (X \circ \varphi) |\varphi_u \times \varphi_v|
  \end{split}
.\een
Note also that the outward-pointing unit normal field $\nu$ satisfies
$\nu\circ \varphi  =  \varphi_v \times  N /| \varphi_v \times N | = \varphi_v\times N/|\varphi_v|$
as $| \varphi_v \times N | = |\varphi_v| $. Then 
\ben\label{eq:equation_boudnary}
(X\circ \varphi)\times \varphi_v \cdot {\bf N} = (X\cdot \nu)\circ \varphi |\varphi_v|.
\een
So inserting \eqref{eq:equation_boudnary} and \eqref{eq:X_times_phi} into \eqref{eq:per_surf} we obtain \eqref{eq:struc_surf}.
\end{proof}

\begin{rem}
Formula \eqref{eq:struc_surf}  may be rewritten as
\ben
\begin{split}
dJ(M)(X) & = \int_M \mathcal H  (X\cdot \mathfrak n) \;  ds +  \int_{\partial M}  X\cdot \nu   \, ds,
\end{split}
\een
where $\mathfrak n$ and $\mathcal H$ are the unit normal field and mean curvature  on $M$, 
respectively. 
So by definition $n\circ \varphi = N$ and $\mathcal H \circ \varphi = H$. 
Also in this case our main theorem is satisfied and we recover 
\eqref{eq:struc_manifold} with 
$$ h(X^\bot) = \int_M \mathcal H  (X^\bot \cdot \mathfrak n) \;  ds, \qquad g(X_{|\partial M} \cdot \nu ) =  \int_{\partial M}  X_{|\partial M} \cdot \nu   \, ds.  $$
\end{rem}

\subsection{A shape gradient of order one}
Provided that the manifold $M$ is smooth enough we have 
seen in the examples from the previous sections that the 
shape derivative was always a distribution of 
order zero in the sense that $g$ and $h$ were 
linear functionals on $C^0(\partial M)$ respectively 
$C^0(M)$.

Let $\gamma:[0,L] \rightarrow \R^2$ an arc-length parametrised regular curve. 
Then $M:=\gamma([0,L])$ is a closed submanifold of $\R^2$. We define the 
{\bf elastic energy} associated with $\gamma$ as
$$ E(M) := \int_0^L \kappa^2 \, ds, $$
where $\kappa$ denotes the curvature of $\gamma$. 
Here we are interested in the unconstrained case, where we do not impose
any further conditions at the end points of the curve. 

Let us introduce some notation.   We define the tangent vector field along $\gamma$ by $T:= \gamma'$ and 
$N:= RT$ where $R$ denotes the 90 degrees counter-clockwise rotation matrix in $\R^2$.  
Further we denote by $\mathfrak n:M\rightarrow \R^2$ the unit normal field and by 
$\mathfrak t:M\rightarrow \R^2$ the tangent field 'living' on 
$M$ so that by definition  $N = \mathfrak n \circ \gamma$ and $T = \mathfrak t\circ \gamma $. 
Note that by definition $T' = \kappa N$ and $N' = - \kappa T$. 

For the derivation of the first variation of the anisotropic elastic  energy 
with fixed end points, we refer the reader 
to \cite[Lem. 2.2, p. 502]{garcke_barrett_nuernberg2012}.

\begin{lem}
 Let  $\gamma:[0,L] \rightarrow \R^2$ be a $C^2$-regular arc-length 
 parametrised embedded curve such that $M:= \gamma([0,L]) \subset D$. For 
 every $X\in C^2_c(D,\R^d)$,  we have
\begin{align*}
dE(M;X)  =&  \int_0^L   (2\kappa''  + \kappa^3) (X  \cdot \mathfrak n)\circ\gamma  \, ds
+ \bigg[   2\kappa \nabla_\Gamma( X\cdot \mathfrak n )\circ \gamma \cdot 
\gamma'  -  2\kappa' (X  \cdot \mathfrak n) \circ \gamma \bigg]^L_0.
\end{align*}
\end{lem}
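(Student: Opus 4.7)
The plan is to compute the variation directly by parametrizing the perturbed curve over the fixed interval $[0,L]$, using the Frenet frame to sort out the tangential and normal components, and then integrating by parts to arrive at the claimed form.

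First, I set $\gamma_t(s) := \Phi_t(\gamma(s))$ for $s \in [0,L]$, so that $\Phi_t(M) = \gamma_t([0,L])$. Because $\gamma_t$ is generally not arc-length parametrized for $t \ne 0$, I use the coordinate formula for the planar curvature
\[
\kappa_t(s) \;=\; \frac{\det\bigl(\gamma_t'(s),\gamma_t''(s)\bigr)}{|\gamma_t'(s)|^3},
\]
so that $E(\Phi_t(M)) = \int_0^L \kappa_t^2(s)\,|\gamma_t'(s)|\,ds$. The derivative at $t=0$ is obtained by Leibniz's rule, using $\gamma_t'(s)|_{t=0} = T(s)$, $\gamma_t''(s)|_{t=0} = \kappa(s)N(s)$, and $(\partial_t \gamma_t)|_{t=0} = X\circ \gamma =: V$.

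Second, I decompose $V = \phi\, T + \psi\, N$ with $\phi := (X\cdot \mathfrak t)\circ\gamma$ and $\psi := (X\cdot\mathfrak n)\circ\gamma$, and use the Frenet relations $T' = \kappa N$ and $N' = -\kappa T$ to express
\[
V' = (\phi' - \kappa\psi)\, T + (\kappa\phi + \psi')\, N,
\]
and similarly for $V''$. A direct computation then gives
\[
\partial_t |\gamma_t'|\big|_{t=0} \;=\; V'\cdot T \;=\; \phi' - \kappa\psi,
\]
and, by collecting the contributions from $\det(V',\kappa N) + \det(T,V'')$ minus $3\kappa(\phi'-\kappa\psi)$,
\[
\partial_t \kappa_t\big|_{t=0} \;=\; \psi'' + \kappa^2\psi + \kappa'\phi.
\]
Substituting these into $\partial_t(\kappa_t^2 |\gamma_t'|)|_{t=0} = 2\kappa\,\partial_t\kappa_t + \kappa^2\,\partial_t|\gamma_t'|$ and simplifying produces an integrand of the form $2\kappa\psi'' + \kappa^3\psi + (\kappa^2\phi)'$.

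Third, I integrate by parts twice in $s$ on the term $\int_0^L 2\kappa\psi''\, ds$ to transfer both derivatives onto $\kappa$, yielding the bulk integrand $(2\kappa'' + \kappa^3)\psi$ together with the boundary contributions $[2\kappa\psi' - 2\kappa'\psi]_0^L$. The remaining tangential term $\int_0^L (\kappa^2\phi)'\, ds$ reduces to an endpoint contribution by the fundamental theorem of calculus, and by Theorem~\ref{thm:structure} this tangential boundary contribution must match the part of $dE(M)(X)$ captured by the boundary functional $g$ applied to $X|_{\partial M}\cdot\nu$. Finally, since $\psi = (X\cdot\mathfrak n)\circ\gamma$ and $\gamma$ is arc-length parametrized, I identify $\psi'(s) = \nabla_\Gamma(X\cdot\mathfrak n)\circ\gamma(s)\cdot\gamma'(s)$, which produces exactly the bracketed boundary expression in the statement.

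The only genuinely delicate step is the algebraic simplification leading to the identity $\partial_t(\kappa_t^2|\gamma_t'|)|_{t=0} = 2\kappa\psi'' + \kappa^3\psi + (\kappa^2\phi)'$; the cross terms coupling $\phi, \psi, \kappa$ and its derivatives must cancel cleanly in a way that is consistent with the structure theorem, which predicts that $\phi$ can appear only through boundary values. I would sanity-check the computation by verifying the closed-curve case (where all boundary terms drop and $\int \kappa^2\,ds$ is known to have first variation $\int(2\kappa''+\kappa^3)\psi\,ds$), which agrees with the interior integrand above.
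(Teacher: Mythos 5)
Your overall strategy is the same as the paper's: differentiate $\int_0^L \kappa_t^2\,|\gamma_t'|\,ds$ under the integral, compute $\dot\kappa$ and $\partial_t|\gamma_t'|$ at $t=0$, and integrate by parts twice. Your intermediate formulas are correct: with $V=X\circ\gamma=\phi\,T+\psi\,N$ one indeed gets $\partial_t|\gamma_t'|\big|_{t=0}=\phi'-\kappa\psi$, $\dot\kappa=\psi''+\kappa^2\psi+\kappa'\phi$, and hence $\partial_t\bigl(\kappa_t^2|\gamma_t'|\bigr)\big|_{t=0}=2\kappa\psi''+\kappa^3\psi+(\kappa^2\phi)'$. (The paper reaches the same point by differentiating the Frenet relations implicitly rather than via the determinant formula for $\kappa_t$; that difference is cosmetic.)

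The gap is in your last step. Integrating by parts, your integrand yields
\[
dE(M;X)=\int_0^L(2\kappa''+\kappa^3)\,\psi\,ds+\Bigl[\,2\kappa\psi'-2\kappa'\psi+\kappa^2\phi\,\Bigr]_0^L,
\]
whereas the statement asserts the boundary bracket is $2\kappa\psi'-2\kappa'\psi$ only. You notice the leftover term $[\kappa^2\phi]_0^L$ and dispose of it by asserting it ``must match'' the $g$-part of the structure theorem, but the formula you are asked to prove contains no such term (and the remark following the lemma explicitly claims $g=0$), so the argument does not close. The term does not vanish in general: $[\kappa^2\phi]_0^L=\kappa(L)^2\,(X\cdot\mathfrak t)(\gamma(L))-\kappa(0)^2\,(X\cdot\mathfrak t)(\gamma(0))$, and since $\nu=\pm\mathfrak t$ at the two endpoints this is exactly a nontrivial functional of $X|_{\partial M}\cdot\nu$ unless $\kappa$ vanishes there. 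In fact your computation agrees with the paper's own penultimate display, whose bracket $2\kappa\dot\gamma'\cdot N-2\kappa'\dot\gamma\cdot N-\kappa^2\dot\gamma\cdot T$ equals $2\kappa\psi'-2\kappa'\psi+\kappa^2\phi$ after substituting $2\kappa\dot\gamma'\cdot N=2\kappa\psi'+2\kappa^2\phi$; the residual $\kappa^2\phi$ is silently dropped in the paper's final line. So your algebra is right and the discrepancy appears to lie in the statement itself; but as a proof of the lemma as written, you would have to either show $[\kappa^2\phi]_0^L=0$ (false in general) or say explicitly that the stated formula must be supplemented by this endpoint term, which is of the form $g(X|_{\partial M}\cdot\nu)$ predicted by Theorem~\ref{thm:structure}.
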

\begin{proof} Denote by $\Phi_t$ the flow generated by $X\in C^2_c(D,\R^2)$. Set $\gamma_t(s) := \Phi_t(\gamma(s))$. We compute
\begin{equation}\label{dE_free_elastic}
\begin{split}
 dE(M;X) & = \dt\left( \int_0^L \kappa_t^2 |\gamma_t'|\, ds\right)_{|t=0} \\
& =  \int_0^L 2\kappa \dot \kappa + \dot \gamma'\cdot T \kappa^2 \, ds.  
\end{split}
\end{equation} 
Let us determine a formula for the variation of the curvature, that is, $\dot \kappa$. Differentiating
$ T_t' = \kappa_t v_t N_t$ we obtain $\dot T' = \dot\kappa N + \kappa \dot N + \dot v \kappa N$ and thus
 \ben\label{eq:dot_Tb}
  \dot T'\cdot N = \dot \kappa + \dot v \kappa 
 \een
and differentiating $\gamma'_t =  v_t T_t$ yields  
$  \dot \gamma_t' =  \dot v_t T_t    + v_t \dot T_t $ from whence we get by another 
differentiation 
$\dot \gamma'' =    \dot v' T + \dot v \kappa N  + \dot T',    $
where we used $(v_t)'_{|t=0} =0 $. 
So 
\ben\label{eq:dot_Tb-2}
 \dot T'\cdot N = \dot \gamma''\cdot N - \dot v \kappa.
 \een
Putting  \eqref{eq:dot_Tb} and \eqref{eq:dot_Tb-2} together, we obtain
\ben\label{dot_kappa}
 \dot \kappa =  \dot \gamma''\cdot N  - 2 \dot v \kappa.
\een
So plugging \eqref{dot_kappa} into \eqref{dE_free_elastic}
and integrating by parts, we obtain 
\begin{align*} dE(M;X)  =&  \int_0^L 2\kappa (  \dot \gamma''\cdot N  - 2 \dot v \kappa ) + \dot \gamma'\cdot T \kappa^2 \, ds\\
  =&  \int_0^L 2\kappa  \dot \gamma''\cdot N  - 3 \kappa^2 \dot \gamma'\cdot T \, ds\\
  =&  \int_0^L  - 2\kappa'  \dot \gamma'\cdot N \underbrace{- 2\kappa  \dot \gamma'\cdot N' - 3 \kappa^2 \dot \gamma'\cdot T}_{- \kappa^2 \dot \gamma'\cdot T} \, ds 
  + \bigg[_0^L 2\kappa  \dot \gamma'\cdot N\bigg]^T_0 \\
  =& \bigg[ 2\kappa  \dot \gamma'\cdot N\bigg]_0^L  -  \int_0^L   2\kappa'  \dot \gamma'\cdot N + \kappa^2 \dot \gamma'\cdot T \, ds\\
   =&  \int_0^L   (2\kappa''  + \kappa^3)(X\circ \gamma) \cdot N  \, ds
    + \bigg[2\kappa  \dot \gamma'\cdot N - 2\kappa' \dot\gamma\cdot N - \kappa^2  \dot\gamma \cdot T \bigg]^T_0.
    \end{align*}
On account of the identities 
$ 2\kappa (\dot \gamma \cdot N)' = 2\kappa \dot \gamma' \cdot N - 2 \kappa^2 \dot \gamma\cdot T $
and  $\dot \gamma(s) = X\circ \gamma(s)$ and 
$(\dot \gamma \cdot N)'   = \nabla_\Gamma(X\cdot \mathfrak n)\circ \gamma \cdot \gamma' $, we recover the desired 
formula. 
\end{proof}

\begin{rem}
We see that also in this case  \eqref{eq:struc_manifold} is satisfied. We have
$$ h(X^\bot) =  \int_0^L   (2\kappa''  + \kappa^3) (X^\bot  \cdot \mathfrak n)\circ\gamma  \, ds
- \bigg[  2\kappa' (X^\bot  \cdot \mathfrak n) \circ \gamma \bigg]^L_0 + 
\bigg[   2\kappa \nabla_\Gamma( X^\bot\cdot \mathfrak n )\circ \gamma \cdot 
\gamma' \bigg]^L_0 
$$
and  $g=0$. Note that  $h:C^1( M) \rightarrow \R$ 
is a  distribution of order $k=1$. This well-known result is interesting 
as it gives an example for which $g=0$ although the 
manifold $M$ has non-empty boundary $\partial M\ne \emptyset$; compare Corollary \ref{cor:structure1}.  
Note that if we fixed the end points of $\gamma$, then the term 
$\big[  2\kappa' (X  \cdot \mathfrak n) \circ \gamma \big]^L_0 = 0$ because 
$X(\gamma(0)) = X(\gamma(L)) = 0$. 
\end{rem}

\begin {thebibliography}{ABC}
\bibitem{AmmEschIII}
Amann, H. and Escher, J.
\emph{Analysis. {III}}, Grundstudium Mathematik. [Basic Study of Mathematics], 
Birkh\"auser Verlag, Basel, (2001).

\bibitem{AubCell}
Aubin, J.  P.  and Cellina, A.  
\emph{Differential Inclusions.  Set-Valued Maps and 
Viability Theory.}, Grundstudium Mathematik. [Basic Study of Mathematics], 
Springer-Verlag (1984).

\bibitem{garcke_barrett_nuernberg2012}
	Barrett, J.  W.  and Garcke, H.  and N{\"u}rnberg, R. 
	\emph{Parametric approximation of isotropic and anisotropic elastic
	 flow for closed and open curves}, 
	 Numer.  Math. , 120:489--542, (2012).

\bibitem{Delfour1992}
 Delfour, M.C and  Zol\'esio, J.P,
\emph{Structure of shape derivatives for nonsmooth domains}, Journal of Functional Analysis,
33:1--33, (1992).

\bibitem{MR2731611}
Delfour, M. C. and Zol{\'e}sio, J.-P.,
\emph{Shapes and geometries}, 
volume 22 of Advances in Design and Control. SIAM, Philadelphia, PA, 
second edition, (2011).

\bibitem{delsturm}
Delfour, M. C.  and Sturm, K., 
\emph{Parametric semidifferentiability of minimax of lagrangians: averaged adjoint state approach},  submitted

\bibitem{desaintzolesio}
Desaint, F. R. and  Zol\'esio,J.-P.,
\emph{Manifold Derivative in the Laplace-Beltrami Equation},
Journal of Funcaional Analysis, 151:234--269, (1997).

\bibitem{ferchichizolesio}
 Ferchichi, J.  and  Zol\'esio, J.-P,
\emph{Shape sensitivity for the Laplace-Beltrami operator with 
singularities},
Journal of Differential Equations, 196:340--384, (2004).

\bibitem{Fremiotthesis}
Fremiot, G., 
\emph{Structure de la semi-d\'eriv\'ee eul\'erienne dans le cas de domaines fissur\'es
et quelques applications},
Th\`ese doctorat, Univsersit\'e de  Nancy, (2000).

\bibitem{MR1803575}
Fremiot, G.  and Sokolowski, J. ,
\emph{A structure theorem for the {E}uler derivative of
              configuration functionals defined on domains with cracks}, 
	      Sibirsk. Mat. Zh. ,  41:1183--1202, iv, (2000).

\bibitem{Fremond}
Fremond, M., 
\emph{Non-Smooth Thermomechanics},
Springer Science \& Business Media, (2013).

\bibitem{laurain:tel-00139595}
Laurain, A.,
\emph{Singularly perturbed domains in shape optimization}, Doctoral thesis,{Universit{\'e} Henri Poincar{\'e} - Nancy I}, June (2006).

\bibitem{crack_hoem}
	H{\"o}mberg, D.  and Khludnev, A.  M.  and Soko{\l}owski, J.,
	\emph{Quasistationary problem for a cracked body with electrothermoconductivity}
	Interfaces free boundaries,
	3:129--142, (2001).

\bibitem{crack_soko}
	Khludnev, A.  M.  and Novotny, A.  A.  and Soko{\l}owski, J.  and
 {\. Z}ochowski, A. 	
 \emph{Shape and topology sensitivity analysis for cracks in elastic bodies 
	on boundaries of rigid inclusions},
 J. Mech. Phys. Solids, 57:1718--1732, (2009).

\bibitem{crack_soko2}
	Khludnev, A.   M.   and Ohtsuka, K.  A.   and Soko{\l}owski, J. 
	\emph{On derivative of energy functional for elastic bodies with 
	cracks and unilateral conditions},
	Quart. Appl. Math. 60:99--109, (2002).

\bibitem{kirszbraun}
Kirszbraun, M.  D.,
	\emph{\"Uber die zusammenziehende und Lipschitzsche Transformationen},
Fund.  Math.  22: 77--108, (1931).

\bibitem{kuehnel2006}
	K{\"u}hnel, W.,
\emph{ Differential Geometry: Curves - Surfaces - Manifolds},
AMS, (2006).

\bibitem{LamPierr06}
Lamboley, J.  and Pierre, M. ,
\emph{Structure of shape derivatives around irregular domains and applications}, Journal of Convex Analysis 14,
4:807--822, (2007).

\bibitem{Lee}
Lee, J. M.
\emph{Introduction to Smooth Manifolds}, Springer Science and Buisiness Media, (2003).

\bibitem{MR0015180}
Nagumo, M.,
\emph{\"{U}ber die {L}age der {I}ntegralkurven gew\"ohnlicher {D}ifferentialgleichungen}, Proc. Phys.-Math. Soc. Japan (3),
24:551--559, (1942).

\bibitem{PierreNovruzi}
Novruzi, A. and Pierre, M.,
\emph{Journal of Evolution Equations}, Proc. Phys.-Math. Soc. Japan (3),
2:365--382, (2002).

\bibitem{sokozol}
Sokolowski, J.  and Zo\'esio, J.-P., 
\emph{Introduction to shape optimization},
Springer, (1992).

\bibitem{sturm0}
Sturm, K.  ,
\emph{On shape optimization with non-linear partial differential equations}, Doctoral thesis, Technische Universilt\"at of Berlin, Germany (2014).

\bibitem{sturm3}
Sturm, K. ,
\emph{Minimax Lagrangian approach to the differentiability of non-linear PDE constrained shape functions without saddle point assumption},  
accepted for publication in SIAM J. on Control and Optim., May (2015).

\bibitem{valentine1}
Valentine, F.  A.,
\emph{On the extension of a vector function so as to 
preserve a Lipschitz condition}, 
 Bulletin of AMS, 49: 100–108, (1943).

\bibitem{valentine2}
Valentine, F.  A.,
\emph{A Lipschitz Condition Preserving Extension for a 
Vector Function}, 
American Journal of Mathematics, 67 (1): 83--93, (1945).

\end{thebibliography}
%\bibliographystyle{plain}
%\bibliography{my_bib3}
\end{document}